\documentclass[11pt]{amsart}
\usepackage{amssymb,amsmath,amsfonts,amsthm}
\usepackage{comment}
\usepackage{color}
\usepackage{enumitem}

\usepackage{mdframed}


\newtheorem{theorem}{Theorem}[section]
\newtheorem{lemma}[theorem]{Lemma}
\newtheorem{proposition}[theorem]{Proposition}

\newtheorem{ltheorem}{Theorem} 

\theoremstyle{definition}
\newtheorem{definition}[theorem]{Definition}

\newtheorem{remark}[theorem]{Remark}

\renewcommand{\phi}{\varphi}

\usepackage{mdframed}

\def\real{\mathbb{R}}

\def\integer{\mathbb{Z}}
\def\natural{\mathbb{N}}
\def\torus{\mathbb{T}^3}
\def\SL{SL(3,\integer)}

\def\supp{\operatorname{supp}}
\def\d{\operatorname{dist}}
\def\dim{\operatorname{dim}}

\def\leng{\operatorname{length}}
\def\vol{\operatorname{Vol}}

\def\cW{\mathcal{W}}

\def\cH{\mathcal{H}}
\def\ch{\mathfrak{h}}

\def\quand{\quad\text{and}\quad}






\newcommand{\norm}[1]{{\left\lVert  #1  \right\rVert}}
\newcommand{\abs}[1]{{\left\lvert  #1  \right\rvert}}

\title
{Geometric growth for Anosov maps on the $3$ torus}
\author{ Mauricio Poletti}
\address{LAGA -- Universit\'e Paris 13, 99 Av. Jean-Baptiste Cl\'ement, 93430 Villetaneus, France.}
\email{mpoletti@impa.br}

\begin{document}

\begin{abstract}
We prove that for Anosov maps of the $3$-torus if the Lyapunov exponents of absolutely continuous measures in every direction are equal to the geometric growth of the invariant foliations 
then $f$ is $C^1$ conjugated to its linear part.
\end{abstract}

\maketitle


\section{Introduction}
Let $f:M\to M$ be a $C^\nu$ diffeomorphism, $\nu>1$, of a compact $d$-dimensional $C^\nu$ manifold, we say that $\cW$ is a \emph{$C^\nu$ foliation} of dimension $k$ if the leaves are $k$-dimensional $C^\nu$ manifolds and varies continuously in the transversal direction.
To be more clear, for every $p\in M$ there exists a H\"older continuous map $\Phi:[-1,1]^d \to M$ such that
\begin{itemize}
\item $\Phi(0,0)=p$, for every $y\in [-1,1]^{d-k}$ $\Phi([-1,1]^k\times\{y\})\subset \cW(\Phi(0,y))$,
\item the restriction of $\Phi$ to $[-1,1]^k\times\{y\}$ is $C^\nu$,
\item $(x,y)\mapsto\frac{\partial \Phi}{\partial x}(x,y)$ is H\"older continuous. 
\end{itemize}
We call $\Phi([-1,1]^d)$ a \emph{foliated box}.

We call a foliation \emph{$f$-invariant} if $f$ maps leaves to leaves, i.e: $f(\cW(x))=\cW(f(x))$.

Fixing some Riemannian metric on $M$, we induce a Riemannian metric on $\cW(x)$ given by the restriction of the metric to the leaf. Lets call $\cW_r(x)$ the closed disc of radius $r$, centered at $x$, in $\cW(x)$. Denote by $\vol_\cW(x)$ the volume
induced by the Riemannian metric on $\cW(x)$.

We say that a foliation $\cW$ is \emph{expanding} if there exist $C>0$ and $\lambda>1$ such that $\norm{Df^n(x)v}\geq C\lambda^n \norm{v}$ for every $v\in T_x\cW$. We say that $\cW$ is contracting if it is expanding for $f^{-1}$.
Observe that after a change of metric we can always take $C=1$.

Following \cite{SX09}, we define the geometric growth of an invariant foliation $\cW$:
\begin{definition}
Let $\cW$ be an invariant expanding foliation. The \emph{geometric growth} of a ball of radius $r$ centered at $x$ is defined by
\begin{equation*}
\chi\left(x,r\right)=\limsup \frac{1}{n}\log \vol\left(f^{n}\left(\cW_{r}(x)\right)\right),
\end{equation*}
taking the supremum over $x$ we define 
\begin{equation*}
\chi^{\cW}_{f}|r=\sup_{x\in M} \chi\left(x,r\right).
\end{equation*}
For a contracting foliation we define his geometric growth as $-\chi^{\cW}_{f^{-1}}|r$.
\end{definition}

It is easy to see that $\chi^{\cW}_{f}|r$ does not depends on $r$ and the Riemannian metric (see for example \cite{SX09}), we call this value $\chi^{\cW}_{f}$ the \emph{geometric growth of $\cW$}.

We say that an invariant measure $\mu$ is \emph{absolutely continuous on $\cW$} if the disintegration of the measure with respect to this foliation is absolutely continuous
with respect to the volume measure in the leaves of $\cW$.

\begin{remark}
Usually the partition into leaves of $\cW$ is not measurable (see \cite{FET}), so in order to disintegrate $\mu$ we take a measurable partition such that every element of the partition is contained in some leaf of $\cW$. 

For example we can cover $M$ with foliated boxes and take the partition given by the connected components of the intersection of the leaves of $\cW$ with the foliated boxes.
\end{remark}

Given an $f$-invariant measure $\mu$ we define the Lyapunov exponent with respect to an invariant foliation $\cW$ as
$$
\lambda^\mu_\cW(x)=\lim_{n\to\infty}\frac{1}{n}\log \norm{Df^n(x)\mid_{T_x\cW}},
$$
by Oseledets theorem~\cite{Ose68} this limit exist for $\mu$-almost every point $x\in M$. Moreover, for almost every $x$, there exists an invariant decomposition 
$T_x\cW=E^1(x)\oplus \cdots \oplus E^k(x)$ and numbers $\lambda^\mu_\cW(x)=\lambda^1_\cW(x)>\cdots>\lambda^k_\cW(x)$ such that 
$$
\lambda^i_\cW(x)=\lim_{n\to\pm \infty}\frac{1}{n}\log \norm{Df^n v^i},\quad v^i\in E^i(x).
$$

Relations between the Lyapunov exponents and the geometric growth were studied by Saghin-Xia~\cite{SX09}, they proved that if the invariant measure $\mu$ is absolutely continuous 
on $\cW$ then $\Delta_{\cW}\leq \chi^{\cW}_{f}$, where $\Delta_{\cW}=\int \sum \dim E^i \lambda^i_\cW d\mu$. 

A natural question that arises is: 
\emph{What can we say about $f$ when we have the extremal case $\Delta_{\cW}= \chi^{\cW}_{f}$?}

We address this question proving a rigidity type result for Anosov maps on the torus $\torus=\real^3/\integer^3$. Let's describe the context in which we are going to work.

\subsection{Partially hyperbolic Anosov maps}
We say that $f:\mathbb{T}^d \to \mathbb{T}^d$ is an \emph{Anosov map} if there exists an invariant splitting 
$T_x M=E^u(x)\oplus E^s(x)$, such that $E^u$ is expanding and $E^s$ is contracting, by this we mean that  there exist  constants $\lambda>1$, $C>0$ such that
$$
\norm{Df^{-n}(x)v^u}\leq C\lambda^{-n}\quand \norm{Df^{n}(x)v^s}\leq C\lambda^{-n},$$
for every $n\in\natural$, and unitary vectors $v^u\in E^u$ and $v^s\in E^s$.

Let $F:\real^d\to\real^d$ be a $\integer^d$-equivariant lift of $f$ to the universal cover, the matrix $A=F(\cdot)-F(0):\integer^d\to \integer^d$ is called \emph{the linear part of $f$}, this map induces an action on the torus that we will also call by $A:\mathbb{T}^d\to \mathbb{T}^d$.

A diffeomorphism $f:\mathbb{T}^n\to\mathbb{T}^n$ is called \emph{partially hyperbolic} if there exists $0<\gamma<1$, $C>0$ and an invariant splitting  $T_x M=E^{uu}(x)\oplus E^c(x)\oplus E^{ss}(x)$, such that
$E^{uu}(x)$ is expanding, $E^{ss}(x)$ is contracting and
\begin{equation}\label{eq.domin}
\begin{aligned}
\norm{Df^{n}(x)v^c}&\leq C\gamma^{n}\norm{Df^{n}(x)v^{uu}}\\\quand \norm{Df^{-n}(x)v^c}&\leq C\gamma^{n}\norm{Df^{-n}(x)v^{ss}},
\end{aligned}
\end{equation}
for every $n\in\natural$, and unitary vectors $v^u\in E^{uu}$, $v^c\in E^c$ and $v^s\in E^{ss}$.

In the literature this is also called \emph{pointwise partial hyperbolic} in contraposition of \emph{absolute partial hyperbolic} when \eqref{eq.domin} is true even comparing the norms in different points $x,y\in M$
(See \cite{BP74}). 

We say that an Anosov map $f$ is a \emph{partially hyperbolic Anosov map} if it is partially hyperbolic and the $\dim E^*\neq 0$ for $*=ss,uu,c$. 

If $f:\torus\to\torus$ and $\dim E^u=2$ we have that 
 $E^u(x)=E^{uu}(x)\oplus E^{c}(x)$, as in this case the center bundle $E^c$ is also expanding we will refer to it as \emph{weak unstable} bundle and denote it as $E^{wu}$, also we refer to $E^{uu}$ as \emph{strong unstable} bundle.
From now on we are going to work in $\torus$, and assume that $\dim E^u=2$ (of course all results can be stated when $\dim E^s=2$ just changing $f$ by its inverse $f^{-1}$).

The stable and unstable distributions of an Anosov map are always integrable in $C^\nu$ foliations $\cW^s$ (contracting) and $\cW^u$ (expanding), also the strong unstable distribution $E^{uu}$ is integrable in an expanding foliation $\cW^{uu}$.
In general for a partially hyperbolic diffeomorphism the center direction is not integrable, if it is integrable we say that $f$ is \emph{dynamically coherent}.

In the case we are considering here (Partially hyperbolic Anosov map in $\torus$) dynamically coherence follows form \cite{Pot15}: in fact as $f$ is an Anosov map then its linear part is a hyperbolic matrix, so it is isotopic to Anosov (see \cite[section~2.3]{Pot15}), also we are assuming that we have a partially hyperbolic map of the type $E^s\oplus E^c\oplus E^u$ (in \cite{Pot15} this is called strong partially hyperbolic) then by \cite[theorem~A.1]{Pot15} $f$ is dynamically coherent. This implies that $\cW^u$ is sub-foliated by two transversal foliations $\cW^{uu}$ and $\cW^{wu}$.

\begin{definition}
 Given an expanding foliation $\cW$ we say that $\lambda_\cW(x)$ is a Lebesgue Lyapunov exponent if $\lambda_\cW$ is the Lyapunov exponent for some $f$-invariant measure absolutely continuous in 
 $\cW$.
\end{definition}
\begin{remark}
It is a clasical result (see \cite{PS82} ) that there exist invariant measures that are absolutely continuous with respect to any expanding foliation. We can apply this result to each one of our invariant foliations (for a simple proof in this case see \cite[step~2]{Gog08}). The measures we obtain that way have full support, this will be proved in section~\ref{s.linear}. 

These measures need not to be the same, for example if $f$ is volume preserving, then the volume measure 
is absolutely continuous with respect to the stable and unstable foliation, but in general is not absolutely continuous with respect to the weak unstable foliation
(see \cite{SX09}).
\end{remark}

Now we can state our main result:
\begin{ltheorem}\label{teo}
 Let $f:\torus\mapsto \torus$ be an Partially Hyperbolic Anosov map, then if there are Lebesgue Lyapunov exponents $\lambda_{\cW^\sigma}=\chi^{\cW^\sigma}_{f}$, for $\sigma=s,uu,wu$, then
 $f$ is $C^1$ conjugated to its linear part.
In particular $f$ preserves some measure absolutely continuous with respect to Lebesgue and all their Lyapunov exponents with respect to this measure are Lebesgue Lyapunov exponents. 
 \end{ltheorem}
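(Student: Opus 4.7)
The strategy is to leverage the three equalities $\lambda_{\cW^\sigma}=\chi^{\cW^\sigma}_f$ to show that, for each $\sigma\in\{s,uu,wu\}$, the logarithmic Jacobian $\log\norm{Df|_{E^\sigma}}$ is cohomologous to a constant matching the corresponding eigenvalue of the linear part $A$. This ``matching of periodic data'' along every invariant direction should then let one upgrade the Franks--Manning topological conjugacy $h$ between $f$ and $A$ to a $C^1$ diffeomorphism.

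\emph{Step 1 (geometric growth equals linear eigenvalue).} Denote by $\alpha_s,\alpha_{wu},\alpha_{uu}$ the three real eigenvalues of $A$, with $|\alpha_s|<1<|\alpha_{wu}|<|\alpha_{uu}|$. I would first show $\chi^{\cW^\sigma}_f=\log|\alpha_\sigma|$ as a homological statement: lift an arc $\cW^\sigma_r(x)$ to $\R^3$, use the bounded displacement of the Franks conjugacy $h$ from the identity together with its (bi-)H\"older regularity, and compare the length of $f^n(\cW^\sigma_r(x))$ with $\norm{A^n|_{E^\sigma_A}}$, where $E^\sigma_A$ is the linear eigenspace matching $\sigma$. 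This step is essentially topological.

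\emph{Step 2 (cohomological reduction of the Jacobian).} Fix $\sigma$ and let $m^\sigma$ be an ergodic $f$-invariant measure absolutely continuous along $\cW^\sigma$ with $\lambda_{\cW^\sigma}^{m^\sigma}=\log|\alpha_\sigma|$. Since $\dim E^\sigma=1$, Birkhoff's theorem yields $\int\log\norm{Df|_{E^\sigma}}\,dm^\sigma=\log|\alpha_\sigma|$. The crucial task is to upgrade this single integral equality to the statement that \emph{every} periodic orbit has Lyapunov exponent $\log|\alpha_\sigma|$ along $E^\sigma$, at which point Liv\v{s}ic's theorem will produce a H\"older coboundary $\log\norm{Df|_{E^\sigma}}-\log|\alpha_\sigma|=\psi_\sigma\circ f-\psi_\sigma$. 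I would combine the equality case in Saghin--Xia---which forces the leafwise density of $m^\sigma$ to be essentially invariant along orbits---with the full support of $m^\sigma$ (to be established in Section~\ref{s.linear}) to rule out periodic orbits whose $E^\sigma$-exponent is strictly smaller than $\log|\alpha_\sigma|$.

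\emph{Step 3 (rigidity).} Once the Jacobians of $f$ along $E^s$, $E^{uu}$, $E^{wu}$ are each cohomologous to the constants for $A$, classical rigidity results in the spirit of de la Llave--Marco--Moriy\'on, Gogolev, Kalinin--Sadovskaya and Hammerlindl--Potrie become available: the Franks conjugacy becomes $C^1$ along each of the three invariant foliations, and the bunching inherent to the pointwise partially hyperbolic setting on $\torus$ allows one to glue these leafwise regularities into a global $C^1$ diffeomorphism conjugating $f$ to $A$. The absolutely continuous $f$-invariant measure in the concluding statement is then the pullback of Haar measure via $h$. The main obstacle is Step~2: extracting a coboundary equation from the existence of a \emph{single} absolutely continuous measure achieving equality requires a careful combination of the Saghin--Xia equality case, the full support of $m^\sigma$, and the density of periodic orbits.
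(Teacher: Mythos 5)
Your Step 2 is a genuine gap, and it is the heart of the matter. You propose to upgrade the single integral identity $\int\log\norm{Df|_{E^\sigma}}\,dm^\sigma=\log|\alpha_\sigma|$ to the statement that \emph{every} periodic orbit has exponent $\log|\alpha_\sigma|$, and then invoke Liv\v{s}ic. But the hypothesis of the theorem is deliberately weaker than matching periodic data (the paper says exactly this when comparing itself to de la Llave and Gogolev--Guysinsky), and no mechanism you name closes the distance. The geometric growth does rule out periodic orbits with exponent \emph{larger} than $\chi^{\cW^\sigma}_f$ (such an orbit would force faster leafwise volume growth at that point), but nothing in ``full support of $m^\sigma$ plus the Saghin--Xia equality case'' excludes periodic orbits with \emph{smaller} exponent: a fully supported absolutely continuous measure whose average Jacobian equals $\log|\alpha_\sigma|$ is perfectly compatible, a priori, with periodic points of smaller exponent. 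The implication ``all periodic exponents agree'' is a \emph{consequence} of the conjugacy being $C^1$ along the leaf, so deriving it first is circular unless you supply a new idea, which the proposal does not.

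The paper takes a route that bypasses periodic data entirely. It defines a topological entropy $\ch_{\cW}$ along the foliation, shows $\ch_{\cW}=\chi^{\cW}_f$ for one-dimensional expanding foliations (Proposition~\ref{p.entropyGG}) and that this quantity is preserved by the conjugacy $h$ (Lemma~\ref{l.entropyinv}); the hypothesis then forces the metric $E_A$-entropy of $h_*m$ to equal the eigenvalue $\log|\alpha_\sigma|$, so by Ledrappier's characterization (Theorem~\ref{t.ledrappier}, Proposition~\ref{p.leddens}) the conditionals of $h_*m$ along $E_A$ are Lebesgue (since $\Delta_A\equiv1$). Thus $h$ restricted to a leaf transports a measure with continuous positive density $\Delta_f$ to Lebesgue, which directly gives leafwise $C^1$ regularity (Proposition~\ref{p.abscont} and Theorem~\ref{t.cr}); Journ\'e then assembles the global regularity as in your Step 3. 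One further point of order you should be aware of: for $\sigma=uu$ one only knows a priori that $h(\cW^{uu})\subset E^u_A$, not $h(\cW^{uu})=E^{uu}_A$; the paper must first establish $C^1$ regularity along $\cW^{wu}$ and then invoke Gogolev--Guysinsky to identify $h(\cW^{uu})$ with $E^{uu}_A$ before treating the strong unstable direction, so your three directions cannot be handled symmetrically and simultaneously as Steps 1--2 suggest.
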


 Similar rigidity results were proven by R. de la Llave (\cite{Lla92}) in dimension $2$ and A. Gogolev and M. Guysinsky (\cite{Gog08}) in dimension $3$. In \cite{Lla92} and \cite{Gog08}, regularity of the conjugacy is guaranteed by information on the periodic data. Here we replace by a somewhat weaker condition.

As a corollary of our result we have:
\begin{ltheorem}
If $f:\torus\mapsto \torus$ is a volume preserving partially hyperbolic Anosov map, with absolutely continuous weak unstable foliation and the Lyapunov exponents are equal to the corresponding geometric growth then $f$ is $C^1$ conjugated to its linear part.
\end{ltheorem}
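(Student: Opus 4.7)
The approach is to reduce Theorem B to Theorem A. Since $f$ preserves the volume measure $m$, it is enough to verify that $m$ disintegrates with absolutely continuous conditionals along each of $\cW^s$, $\cW^{uu}$ and $\cW^{wu}$; then every Lyapunov exponent $\lambda_{\cW^\sigma}(m)$ is a Lebesgue Lyapunov exponent, the assumption $\lambda_{\cW^\sigma} = \chi^{\cW^\sigma}_f$ is exactly the hypothesis of Theorem A, and the $C^1$-conjugacy to the linear part follows immediately.

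Absolute continuity of $m$ along $\cW^s$ (and along $\cW^u$) is the classical theorem of Anosov for $C^{1+\alpha}$ Anosov diffeomorphisms preserving a smooth measure. For $\cW^{wu}$ the hypothesis states that the foliation itself is absolutely continuous; together with the fact that $m$ is a smooth reference measure, the standard argument (absolute continuity of the holonomies plus Fubini in a foliated box) shows that $m$ has absolutely continuous conditionals along $\cW^{wu}$. For $\cW^{uu}$ we subfoliate a $\cW^u$-plaque by its $\cW^{uu}$ sub-plaques, using the $\cW^{wu}$ sub-plaques as transversals; since the $\cW^{wu}$-holonomies inside $\cW^u$ are absolutely continuous and Lebesgue on the $\cW^u$-plaque is absolutely continuous by the previous step, Fubini again yields absolutely continuous conditionals of $m$ along $\cW^{uu}$. (This can also be obtained directly from the Brin--Pesin / Pugh--Shub theorem, which provides automatic absolute continuity of the strong stable and strong unstable foliations of any $C^{1+\alpha}$ partially hyperbolic diffeomorphism.)

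The argument is a routine reduction, with no serious obstacle. The only mildly delicate point is the translation of the hypothesis ``$\cW^{wu}$ is absolutely continuous'' (naturally phrased in terms of holonomy maps) into ``$m$ has absolutely continuous conditionals along $\cW^{wu}$'' (the form used in Theorem A); this translation is a standard consequence of having a smooth reference measure on the ambient manifold, and is the same step that is invoked when one deduces the Pesin entropy formula from absolute continuity of unstable foliations.
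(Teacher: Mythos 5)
Your reduction to Theorem A is exactly what the paper intends: it states this result as an immediate corollary of Theorem A with no further argument, and your proof supplies the standard verification that the volume measure has absolutely continuous conditionals along all three invariant foliations, so that it serves as the witness measure for $\sigma=s,uu,wu$ simultaneously. One small remark: your first argument for $\cW^{uu}$ (Fubini over $\cW^{wu}$-transversals inside a $\cW^u$-plaque) requires absolute continuity of the $\cW^{wu}$-\emph{holonomies}, which is stronger than what the hypothesis literally provides if read as absolute continuity of conditionals, but your parenthetical appeal to the Brin--Pesin/Pugh--Shub absolute continuity of the strong unstable foliation closes that step unconditionally.
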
 
 
Observe that by \cite{Var14}, the sole absolute continuity of the central foliation does not imply the smoothness of the conjugacy.
 
Another corollary that we can take from the proof of Theorem~\ref{teo} (more specifically this will be a corollary of theorem~\ref{t.cr}),
\begin{ltheorem}\label{teo2}
 Let $f:\torus\mapsto \torus$ be a volume preserving Anosov map (not necessarily partially hyperbolic), then generically $\lambda_{\cW^u}<\chi^u_{f}$, if $E^u$ is one dimensional (or $-\lambda_{\cW^s}<\chi^s_{f^{-1}}$ if not).
 \end{ltheorem}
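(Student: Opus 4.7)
The plan is to derive Theorem~\ref{teo2} from Theorem~\ref{t.cr} by upgrading the equality $\lambda_{\cW^u} = \chi^u_f$ into a Liv\v{s}ic-type cohomological rigidity, and then to show that this rigidity fails on an open and dense subset of volume preserving Anosov maps on $\torus$. Since the case $\dim E^u = 2$ reduces to the case $\dim E^u = 1$ by passing to $f^{-1}$, I assume throughout that $\dim E^u = 1$, so $\lambda_{\cW^u}$ is a single number and $E^u$ is a line field.

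First I would invoke Theorem~\ref{t.cr}, which I expect to yield, in the one-dimensional unstable setting, that the equality $\lambda_{\cW^u} = \chi^u_f$ forces the H\"older cocycle $\log \|Df|_{E^u}\|$ to be cohomologous to the constant $\chi^u_f$. Liv\v{s}ic's theorem then translates this into a rigidity of the periodic data: for every periodic point $p$ of period $n$,
\[
\tfrac{1}{n} \log \|Df^n|_{E^u(p)}\| \;=\; \chi^u_f,
\]
so every periodic orbit has the same unstable Lyapunov exponent. Thus the locus where equality holds is contained in the set $\mathcal{R}$ of volume preserving Anosov maps whose unstable periodic Lyapunov exponents are all constant.

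Next I would argue that $\mathcal{R}$ is nowhere dense in the space of volume preserving $C^\nu$ Anosov maps on $\torus$. Given any $f\in\mathcal{R}$ and two distinct periodic orbits $\mathcal{O}_1,\mathcal{O}_2$, I aim for a $C^\nu$-small conservative perturbation supported in a chart around a point of $\mathcal{O}_1$ disjoint from $\mathcal{O}_2$, which alters the unstable eigenvalue along $\mathcal{O}_1$ without affecting $\mathcal{O}_2$. Passing to an iterate so that the relevant point is fixed, the two-dimensional stable block leaves enough room to vary the unstable eigenvalue while compensating in the stable directions to keep $\det Df=\pm1$; standard bump-function constructions realize such a linear modification as a compactly supported volume preserving diffeomorphism. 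After this perturbation the unstable eigenvalues along $\mathcal{O}_1$ and $\mathcal{O}_2$ differ, and this is an open condition, so $\mathcal{R}$ has empty interior. Combined with the previous paragraph, the strict inequality $\lambda_{\cW^u} < \chi^u_f$ holds on an open and dense set, giving the genericity claim.

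I expect the core difficulty to be the first step: extracting from Theorem~\ref{t.cr} the full cohomological equivalence of $\log\|Df|_{E^u}\|$ to a constant, in a setting where only the one-dimensional unstable foliation is under control and no partially hyperbolic splitting of the stable direction is available (so the machinery used for Theorem~\ref{teo} does not apply verbatim). The conservative local perturbation in the second step is classical in dimension three, where the two-dimensional stable block supplies ample flexibility to offset the unstable eigenvalue change while preserving volume.
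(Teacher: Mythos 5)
Your proposal is correct and is essentially the paper's argument: apply Theorem~\ref{t.cr} to the volume measure on $\cW^u$, deduce that every periodic point has unstable exponent equal to $\chi^u_f$, and then destroy this by a local volume-preserving perturbation, noting that having two periodic orbits with distinct unstable exponents is an open condition. The step you flag as the core difficulty is not one: the paper reads the constant periodic data directly off the leafwise $C^1$ conjugacy by differentiating $h\circ f^{k_p}=A^{k_p}\circ h$ along $\cW^u$ at a periodic point, with no need for the Liv\v{s}ic reformulation and no use of any partially hyperbolic splitting.
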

 
 \subsection{Idea of the proof}
 First we are going to prove that in this case the geometric growth is equal to a topological entropy, this is done in section~\ref{s.entropy}, and this entropy is invariant by conjugation so is the same for the linear part of $f$.
 
To prove the differentiability of the conjugation we will prove the differentiability in each of the invariant foliations and then use Journe's regularity lemma to conclude the regularity of the conjugation.

The differentiability in each foliation will follow from the fact that a map between one dimensional manifolds is $C^1$ if and only if the push forward of the Lebesgue measure gives a measure absolutely continuous to Lebesgue with continuous Radon-Nikodym derivatives. To prove this property we will push the measures absolutely continuous in each invariant manifold and use Ledrappier-Young formula to conclude the absolute continuity of the map $h$ in each direction.

Section~2 is devoted to recall the terminology and results of \cite{Le84a} and \cite{LY85a} that we will use.

\medskip
\textbf{Acknowledgements.} Thanks to Radu Saghin for the useful discussions and recommendations about the work, to Marcelo Viana for the orientation on this problem, to Fernando Lenarduzzi for reading the paper, also to the anonymous referee for the corrections and recommendations on the manuscript.

\section{metric entropy along foliations}\label{s.partitions}
In this section we recall some definitions and results of \cite{Le84a},\cite{Le84b} and \cite{LY85a}, these results are originally stated for the unstable manifolds but they can be easily adapted to any expanding invariant foliation.

Fix some expanding $f$-invariant foliation $\cW$ and some $f$-invariant probability measure $m$.
 
\begin{definition}
Let $\xi$ be a measurable  partition of $M$, let $C_{\xi}\left(x\right)$ be the element of the partition that contains $x\in M$, we said that $\xi$ is \emph{subordinated to $\cW$} if $C_{\xi}\left(x\right)\subset \cW\left(x\right)$, 
for $m$-almost every $x\in M$.
\end{definition}

Given a partition $\xi$ subordinated to $\cW$, by Rokhlin disintegration theorem (see for example~\cite{FET}), we have a family of measures $\{m^{\xi}_{x}\}_{x\in M}$ such that for every measurable set $A\subset M$:
\begin{itemize}
\item $y\mapsto m^{\xi}_{y}$ is constant on each $C_\xi(x)$,
\item $m^{\xi}_{x}\left(C_{\xi}\left(x\right)\right)=1$,
\item $x\mapsto m^\xi_x(A)$ is measurable,
\item $m\left(A\right)=\int m^{\xi}_{x}\left( A \right) dm$.
\end{itemize}

We call a $\cW$ subordinate partition \emph{adapted} if
\begin{itemize}
\item for $m$-almost every $x\in M$, $C_{\xi}\left(x\right)$ is a neighborhood of $x$ in $\cW$, and
\item $f^{-1}\left(\xi\right)\succ \xi$, by this we mean that for $m$-almost every $x\in M$, $f^{-1}C_\xi(f(x))\subset C_\xi(x)$.
\end{itemize}

By \cite[Proposition~3.1]{Le84a} we have that
\begin{lemma}
There exists a $\cW$ adapted partition. 
\end{lemma}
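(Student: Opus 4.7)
The plan is the standard Sinai--Ledrappier dynamical-refinement construction. First I would pick $\delta>0$ smaller than the size of a foliation chart for $\cW$ and cover $M$ by finitely many foliated boxes of diameter less than $\delta$. Inside them I would build a finite Borel partition $\alpha$ of $M$ whose atoms have diameter less than $\delta$ and whose boundary $\partial\alpha$ is a finite union of smooth codimension-one pieces transverse to $\cW$. Setting $\eta$ to be the finer partition whose atom $C_{\eta}(x)$ is the connected component of $\cW_{\loc}(x)\cap C_{\alpha}(x)$ containing $x$ yields a partition subordinate to $\cW$ whose atoms are open $\cW$-plaques and whose leafwise boundary is contained in $\partial\alpha\cap\cW_{\loc}(x)$.

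The candidate adapted partition is then $\xi=\bigvee_{n\geq 0}f^{n}\eta$. Two requirements are immediate: $f^{-1}\xi=\bigvee_{n\geq -1}f^{n}\eta$ refines $\xi$, so $f^{-1}C_{\xi}(f(x))\subset C_{\xi}(x)$; and
\[
C_{\xi}(x)=\bigcap_{n\geq 0} f^{n} C_{\eta}(f^{-n}x)
\]
is a nested intersection of $\cW$-plaques through $x$, so $\xi$ is subordinate to $\cW$. The nontrivial point is to check that $C_{\xi}(x)$ is a $\cW$-neighborhood of $x$ for $m$-a.e.\ $x$, i.e.\ to rule out that the nested plaques shrink to $\{x\}$.

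To handle this I would introduce $\varphi(y)=-\log\d\!\left(y,\partial_{\cW}C_{\eta}(y)\right)$, with $\d$ the distance inside $\cW(y)$. Using that $\cW$ is $\lambda$-expanding, $\d(x,\partial_{\cW}f^{n}C_{\eta}(f^{-n}x))\geq C\lambda^{n}\,\d(f^{-n}x,\partial_{\cW}C_{\eta}(f^{-n}x))$. If $\varphi\in L^{1}(m)$, Birkhoff applied to $f^{-1}$ gives $n^{-1}\varphi(f^{-n}x)\to 0$ for $m$-a.e.\ $x$; fixing $\varepsilon\in(0,\log\lambda)$, one obtains $\d(f^{-n}x,\partial_{\cW}C_{\eta}(f^{-n}x))\geq e^{-\varepsilon n}$ eventually, whence the leafwise distance from $x$ to $\partial_{\cW}f^{n}C_{\eta}(f^{-n}x)$ stays bounded below by a positive constant. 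Consequently $C_{\xi}(x)$ contains a $\cW$-ball of positive radius around $x$.

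The main obstacle is thus producing a base partition $\alpha$ for which $\varphi\in L^{1}(m)$. I would resolve this by the classical moving-boundary trick: take a one-parameter family $\{\alpha_{t}\}_{t\in[0,\delta]}$ of partitions whose boundaries are translates of a fixed smooth boundary, all transverse to $\cW$. By transversality and compactness there exists $K>0$ with $m(\{x:\d(x,\partial\alpha_{t})<s\})\leq K s$ uniformly in $t$, so
\[
\int_{0}^{\delta}\!\int_{M}\varphi_{t}\,dm\,dt
=\int_{0}^{\delta}\!\int_{0}^{\infty} m\!\left(\{\varphi_{t}>u\}\right)du\,dt<\infty.
\]
Fubini then gives $\varphi_{t}\in L^{1}(m)$ for Lebesgue-a.e.\ $t\in[0,\delta]$, and any such $t$ supplies the partition $\alpha$ required by the construction.
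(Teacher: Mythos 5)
The paper does not actually prove this lemma: it simply cites \cite[Proposition~3.1]{Le84a}. Your proposal reconstructs precisely the construction of that reference (the Sinai--Ledrappier--Strelcyn refinement $\xi=\bigvee_{n\geq 0}f^{n}\eta$ of a plaque partition with thin boundary), so in substance you follow the same route as the source the paper relies on. The dynamical half of your argument --- the identity $C_{\xi}(x)=\bigcap_{n\geq 0}f^{n}C_{\eta}(f^{-n}x)$, the leafwise expansion estimate, and the Birkhoff argument giving $\varphi(f^{-n}x)/n\to 0$ so that the distance from $x$ to $\partial_{\cW}f^{n}C_{\eta}(f^{-n}x)$ stays bounded away from zero --- is correct.

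One step is misstated, although it is repairable and your subsequent computation already contains the repair. For an arbitrary $f$-invariant measure $m$ (and at this stage of the paper $m$ is not assumed absolutely continuous), transversality and compactness do \emph{not} yield a pointwise-in-$t$ bound $m(\{x:\d(x,\partial\alpha_{t})<s\})\leq Ks$: nothing prevents $m$ from charging one particular translate $\partial\alpha_{t_{0}}$ or a thin neighborhood of it (an atomic $m$ sitting on $\partial\alpha_{t_0}$ is the obvious counterexample). What transversality actually gives is the averaged bound $\int_{0}^{\delta}m(\{x:\d(x,\partial\alpha_{t})<s\})\,dt\leq Ks$, obtained by Fubini in $(x,t)$ from the fact that for each fixed $x$ the set of parameters $t$ with $\d(x,\partial\alpha_{t})<s$ has Lebesgue measure $O(s)$. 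This averaged bound is exactly what your double integral needs, so the conclusion $\varphi_{t}\in L^{1}(m)$ for a.e.\ $t$ survives; you should simply replace the uniform claim by the averaged one. Two smaller points worth making explicit: the chosen $t$ also satisfies $m(\partial\alpha_{t})=0$ (needed when you assert that the finitely many initial distances $\d_{\cW}(x,\partial_{\cW}f^{n}C_{\eta}(f^{-n}x))$ are positive for $m$-a.e.\ $x$), and the comparison between the leafwise distance to $\partial_{\cW}C_{\eta}$ and the ambient distance to $\partial\alpha_{t}$ requires the plaques of the foliated boxes to overreach the atoms of $\alpha$, so that $\varphi_{t}$ does not see the artificial edges of the charts.
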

Let $\cH\left(f^{-1}\xi\mid \xi\right)=-\int \log\,m^{\xi}_{x}\left(C_{f^{-1}\xi}(x)\right)$, this does not depend on the adapted subordinated partition (see \cite[Lemma~3.1.2]{LY85a}), so we define the metric $\cW$-entropy as 
$$
\ch_m(f,\cW)=\cH\left(f^{-1}\xi\mid \xi\right).
$$

Let $J^{\cW} f\left(z\right)$ be the Jacobian of $Df(z')\mid_{T\cW}$, for $z'\in \cW(z)$
define $$
\Delta\left(z,z'\right)=\prod_{j=1}^{\infty}\frac{J^{\cW} f\left(f^{-j}z\right)}{J^{\cW} f\left(f^{-j}z'\right)}.
$$
We also recall the next Proposition (\cite[Proposition~3.7]{Le84a}),
\begin{proposition}\label{p.leddens}
Let $\xi$ be an adapted partition, then $\cH\left(f^{-1}\xi \vert \xi\right)=\int \log J^{\cW} f(x) dm$, if and only if, the measure is absolutely continuous in $\cW$, 
moreover 
\begin{equation}\label{eq.abscont}
m^{\xi}_x(B)=\frac{1}{L(x)}\int_{C_\xi (x)\cap B}\Delta(x,t)d \vol_{\cW(x)}(t),
\end{equation}
where $\vol_{\cW(x)}$ is the Lebesgue measure in $\cW(x)$ and $L(y)=\int_{C_\xi (x)}\Delta(x,t)d \mu^u_x(t)$.
\end{proposition}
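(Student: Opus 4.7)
The plan is to prove this in three stages, following Ledrappier's scheme: first establish the general inequality $\cH(f^{-1}\xi\mid\xi)\le \int\log J^{\cW}f\,dm$, then extract from the equality case that $m^\xi_x$ must be absolutely continuous with the claimed density $\Delta(x,\cdot)/L(x)$, and finally verify that the infinite product defining $\Delta$ converges to a continuous function so that the density formula makes sense.

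For the general inequality, I would exploit that for $m$-a.e.\ $x$ the restriction $f\colon C_{f^{-1}\xi}(x)\to C_\xi(f(x))$ is a $C^\nu$ diffeomorphism between pieces of leaves of $\cW$ with Jacobian $J^{\cW}f$. Since $m$ is $f$-invariant and $f^{-1}\xi\succ\xi$, the conditional measures transform as
$$
f_*\bigl(m^\xi_x|_{C_{f^{-1}\xi}(x)}\bigr)=g(x)\,m^\xi_{f(x)},\qquad g(x):=m^\xi_x(C_{f^{-1}\xi}(x)),
$$
so that $\cH(f^{-1}\xi\mid\xi)=-\int\log g\,dm$. Rewriting this pushforward identity in terms of densities against $\vol_\cW$ and applying Jensen's inequality (concavity of $\log$ on the probability measure $m^\xi_{f(x)}$) gives the bound. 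Equality in Jensen's then forces the relevant integrand to be constant along each leaf, which in turn forces $m^\xi_x$ to be absolutely continuous with some density $\rho_x$ satisfying the multiplicative cocycle $\rho_x(y)/J^{\cW}f(y)=g(x)\,\rho_{f(x)}(f(y))$; iterating this backwards along $f^{-n}$ and comparing two points on the same leaf yields $\rho_x(y)/\rho_x(x)=\Delta(x,y)$, from which (\ref{eq.abscont}) follows after normalization by $L(x)$. The converse implication is then a direct substitution into the entropy integral.

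The main obstacle will be the final step: showing that $\Delta(x,y)$ is well-defined and continuous in $y$ along leaves. This is a standard bounded-distortion estimate: since $\cW$ is expanding, $f^{-1}$ contracts along leaves exponentially, so the leafwise distance between $f^{-j}x$ and $f^{-j}y$ decays like $\lambda^{-j}$; combined with the H\"older continuity of $\log J^{\cW}f$ along leaves (inherited from $f\in C^\nu$ and the H\"older regularity of $T\cW$), the telescoping sum $\sum_j \bigl(\log J^{\cW}f(f^{-j}x)-\log J^{\cW}f(f^{-j}y)\bigr)$ converges uniformly in $y$ on any foliated box. Once $\Delta$ is in hand, both implications of the equivalence and the density formula (\ref{eq.abscont}) assemble immediately from the identities produced in the first two stages.
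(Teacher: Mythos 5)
The first thing to say is that the paper does not prove this proposition at all: it is quoted verbatim as Proposition~3.7 of \cite{Le84a}, so there is no in-paper argument to compare yours against. Your sketch is, in outline, Ledrappier's original scheme, but as written the first stage has a genuine circularity. You propose to prove the inequality $\cH(f^{-1}\xi\mid\xi)\le\int\log J^{\cW}f\,dm$ by ``rewriting the pushforward identity in terms of densities against $\vol_{\cW}$'' and then applying Jensen. But this inequality must hold for every invariant measure, including those whose conditionals $m^{\xi}_x$ are singular with respect to $\vol_{\cW(x)}$; at that point of the argument $m^{\xi}_x$ has no density, so there is nothing to rewrite. The standard fix --- and the reason your ``third stage'' must logically come first --- is to construct $\Delta$ and the reference probability measures $\nu_x=L(x)^{-1}\Delta(x,\cdot)\,\vol_{\cW(x)}$ on the atoms $C_\xi(x)$ before anything else (your bounded-distortion estimate does exactly this), verify from the cocycle property of $\Delta$ and the invariance of $m$ that $-\int\log\nu_x\left(C_{f^{-1}\xi}(x)\right)dm=\int\log J^{\cW}f\,dm$, and then apply Jensen to the countable partition of $C_\xi(x)$ into atoms of $f^{-1}\xi$ to obtain $\int\log\bigl(\nu_x(C_{f^{-1}\xi}(x))/m^{\xi}_x(C_{f^{-1}\xi}(x))\bigr)dm\le 0$. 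No density of $m^{\xi}_x$ is ever invoked in the inequality.

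The equality case also needs more than you allow for. Equality in Jensen only gives $m^{\xi}_x(C)=\nu_x(C)$ for the atoms $C$ of $f^{-1}\xi$ inside $C_\xi(x)$; to conclude $m^{\xi}_x=\nu_x$ (hence absolute continuity with the stated density) you must iterate to $f^{-n}\xi$ and use that these refinements generate the Borel $\sigma$-algebra of $C_\xi(x)$ --- which holds because $f^{-1}$ contracts $\cW$, so the atoms of $f^{-n}\xi$ shrink to points --- together with a martingale convergence argument. Similarly, in the converse direction your derivation of $\rho_x(y)/\rho_x(x)=\Delta(x,y)$ by iterating backwards tacitly assumes that the ratio $\rho_{f^{-n}x}(f^{-n}y)/\rho_{f^{-n}x}(f^{-n}x)$ tends to $1$, which requires either continuity of the density (not known a priori) or again a density-point/martingale argument. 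These steps are standard, but they are precisely where the substance of Ledrappier's proof lies, and your outline currently passes over them.
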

Observe that $\int \log J^{\cW} f(x) dm=\int \sum_{E^i\in T\cW}\dim E^i\lambda^i_{m} dm$ where $\lambda^i_{m}$ are the Lyapunov exponents of $m$, then
\begin{theorem}[Theorem~4.8 of \cite{Le84a}]\label{t.ledrappier}
Let $\cW$ be an expanding invariant foliation, then $\ch_m(f,\cW)=\int \sum_{E^i\in T\cW}\lambda^i_{m} dm$, if and only if, the disintegration of the measure with respect to $\cW$ is absolutely continuous.
\end{theorem}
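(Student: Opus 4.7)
The plan is to deduce this statement directly from Proposition~\ref{p.leddens} combined with the standard Oseledets identification of the log-Jacobian integral with the sum of Lyapunov exponents along the foliation.

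By definition $\ch_m(f,\cW) = \cH(f^{-1}\xi \mid \xi)$ for any adapted $\cW$-subordinate partition $\xi$, and Proposition~\ref{p.leddens} already asserts that this entropy equals $\int \log J^\cW f \, dm$ if and only if the disintegration of $m$ along $\cW$ is absolutely continuous (it even furnishes the explicit densities in \eqref{eq.abscont}). Hence Theorem~\ref{t.ledrappier} is reduced to the purely measure-theoretic identity
\[
\int \log J^\cW f \, dm \;=\; \int \sum_{E^i \in T\cW} \dim E^i \cdot \lambda^i_m \, dm.
\]

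To establish this identity, I would apply Oseledets' theorem to the derivative cocycle restricted to the $f$-invariant subbundle $T\cW$. On one hand, the cocycle identity $\log J^\cW f^n(x) = \sum_{j=0}^{n-1} \log J^\cW f(f^j x)$ together with Birkhoff's ergodic theorem gives $\frac{1}{n}\log J^\cW f^n \to \tilde\varphi$ for $m$-a.e. $x$, where $\tilde\varphi$ is the conditional expectation of $\log J^\cW f$ with respect to the $\sigma$-algebra of $f$-invariant sets, and $\int \tilde\varphi \, dm = \int \log J^\cW f \, dm$. On the other hand, Oseledets' theorem applied to $Df\mid_{T\cW}$ gives $\frac{1}{n} \log |\det Df^n \mid_{T\cW}|(x) \to \sum_i \dim E^i(x)\, \lambda^i_m(x)$ for $m$-a.e. $x$, and this determinant equals $J^\cW f^n(x)$. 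Equating the two limits and integrating over $M$ yields the required identity.

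\textbf{Main obstacle.} The substantive content is hidden in Proposition~\ref{p.leddens}, which is already stated, so at this point the obstacle is essentially cosmetic. The only genuine verification is that $\log J^\cW f \in L^1(m)$, which is immediate because $f$ is $C^\nu$ and the tangent bundle of $\cW$ varies continuously, so $J^\cW f$ is bounded and bounded away from zero on the compact manifold. A minor subtlety worth flagging is that the Oseledets splitting inside $T\cW$ is obtained by restricting the ambient splitting to the invariant subbundle $T\cW$, which is legitimate precisely because $\cW$ is $f$-invariant and therefore $T\cW$ is a $Df$-invariant measurable subbundle of $TM$.
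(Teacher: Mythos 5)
Your proposal is correct and follows essentially the same route as the paper, which likewise deduces the theorem from Proposition~\ref{p.leddens} together with the observation that $\int \log J^{\cW}f\,dm=\int\sum_{E^i\in T\cW}\dim E^i\,\lambda^i_m\,dm$ (you merely spell out the Oseledets/Birkhoff justification of that identity, which the paper leaves implicit). The only point worth noting is that the multiplicities $\dim E^i$ you carry are indeed needed; their absence in the displayed statement of Theorem~\ref{t.ledrappier} is a typo in the paper.
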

\begin{remark}\label{r.rho}
Observe that equation~\eqref{eq.abscont} gives that the disintegration of the measure $m$ with respect to a partition subordinated to the foliation $\cW$ is of the form $dm_x=\rho d\vol_{\cW(x)}$, where $\vol_{\cW(x)}$ is the Lebesgue measure in $\cW(x)$ induced by the restriction of the Riemannian structure of $M$ to $\cW(x)$ and $\rho$ is defined modulo a normalization by the relation 
\begin{equation}\label{eq.delta}
\frac{\rho(x)}{\rho(y)}=\Delta(x,y),
\end{equation}
for every $x,y$ in the same $\cW$ leave.

Also, when $\cW$ is one dimensional, $x\mapsto \vol_{\cW(x)}$ is locally continuous (continuous in local charts) in the weak$^*$ topology.
Lets explain how this follows: by definition of the foliation there exists a foliated chart $\Phi:[-1,1]^d\to M$ centered at $x$, then for every $y\in [-1,1]^{d-1}$, $I\subset [-1,1]\times\{y\}$, 
$$\vol_{\cW(y')}(\Phi(I))=\int_I \norm{\frac{\partial \Phi}{\partial x}(x',y)}dx',$$ where $y'=\Phi(0,y)$ and $\norm{\cdot}$ is the norm induced by the Riemannian metric.

Normalizing $\rho$ such that, for every $y\in [-1,1]^{d-1}$, $\int_{\Phi([-1,1]\times \{y\})} \rho d\vol_{\cW(y')}=1$, by \eqref{eq.delta} we have that $\rho(y')\int \Delta(t,y')d\vol_{\cW(y')}(t)=1$, then 
$y\mapsto \rho d\vol_{\cW(y')}$ is also locally continuous. 
 \end{remark}

\section{Topological entropy along foliations}\label{s.entropy}
In this section we define some topological invariant quantities that are related to the geometric growth, in particular on the case we are treating we get that the geometric growth is topologically invariant.

\begin{definition}
 Given $K\subset \cW(x)$ with $\overline{K}$ compact, we call $E\subset K$, $n,\epsilon$-\emph{separated} if for every $y,z\in E$, $d_{x}^{n}(y,z)>\epsilon$, 
 where $$
 \d_{x}^{n}(y,z)=\max_{0\leq j<n}\d_{\cW}\left(f^{j}(y),f^{j}(z)\right).$$
 
 We say that $E$ is $n,\epsilon$-\emph{generator} if for every $z\in K$ there exist some $y\in E$ such that $\d_{x}^{n}(y,z)<\epsilon$.
\end{definition}
Let $s(K,n,\epsilon)$ be the maximal cardinality of an $n,\epsilon$-separated set, and $g(K,n,\epsilon)$ be the minimum cardinality of an $n,\epsilon$-generator set.
Define 
$$
 s(K,\epsilon)=\limsup\frac{1}{n}\log s(K,n,\epsilon),
$$
$$
 g(K,\epsilon)=\limsup\frac{1}{n}\log g(K,n,\epsilon)
$$
and $$
s(K)=\lim_{\epsilon\rightarrow 0}s(K,\epsilon),\quad g(K)=\lim_{\epsilon\rightarrow0}g(K,\epsilon).
$$

It is easy to see (same argument as in the classical entropy case, see \cite{FET}), that $s(K)=g(K)$, so we define the \emph{$\cW$ entropy} as
$$
 \ch_{\cW}=\sup_{x\in M} \sup_{K\subset \cW(x)} s(K)=\sup_{x\in M} \sup_{K\subset \cW(x)} g(K).
$$ 

We are going to prove that in our case this entropy is equal to the geometric growth.
\begin{lemma}\label{l.entropyineq1}
If $\cW$ is a one dimensional expanding foliation then $\ch_{\cW}\leq \chi_f^\cW$.
\end{lemma}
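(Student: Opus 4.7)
The plan is to bound the cardinality of an $(n,\epsilon)$-separated set in a compact arc by the leaf-length of its $n$-th iterate, using one-dimensionality to line the separated points up along the leaf.

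Fix $x \in M$ and a compact $K \subset \cW(x)$; choose $r > 0$ with $K \subset \cW_r(x)$, which is possible since $\cW(x)$ is one-dimensional and $K$ is bounded. Let $E = \{y_1, \dots, y_N\} \subset K$ be an $(n,\epsilon)$-separated set, ordered along the $1$-dimensional leaf $\cW(x)$, and let $I_i \subset \cW(x)$ denote the arc from $y_i$ to $y_{i+1}$. The arcs $I_i$ have pairwise disjoint interiors and $\bigcup_i I_i \subset \cW_r(x)$.

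For each $i$, the $(n,\epsilon)$-separation yields some $0 \leq j_i < n$ with $d_\cW\bigl(f^{j_i}(y_i), f^{j_i}(y_{i+1})\bigr) > \epsilon$; since $\cW$ is one-dimensional, the leaf distance agrees with the arc length, so $\vol_\cW(f^{j_i}(I_i)) > \epsilon$. The expanding hypothesis, after adapting the metric so that $C = 1$, forces $\|Df|_{T\cW}\| \geq \lambda > 1$ pointwise, hence leaf arc-lengths are non-decreasing under $f$, and $\vol_\cW(f^n(I_i)) \geq \vol_\cW(f^{j_i}(I_i)) > \epsilon$. Because $f^n$ is a diffeomorphism from $\cW(x)$ to $\cW(f^n(x))$, the arcs $f^n(I_i)$ retain disjoint interiors, and they lie inside $f^n(\cW_r(x))$; summing gives
$$\vol_\cW\bigl(f^n(\cW_r(x))\bigr) \;\geq\; \sum_{i=1}^{N-1} \vol_\cW(f^n(I_i)) \;\geq\; (N-1)\,\epsilon,$$
so $s(K,n,\epsilon) \leq \epsilon^{-1} \vol_\cW(f^n(\cW_r(x))) + 1$.

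Taking $\limsup \frac{1}{n}\log$ (the additive constant is absorbed whether the volume grows exponentially or remains bounded), then $\epsilon \to 0$, and finally the supremum over $K$ and $x$, yields $\ch_\cW \leq \sup_{x \in M} \chi(x,r) = \chi_f^\cW$. The main obstacle is where one-dimensionality is essential: it both identifies leaf distance with arc length and lets an $(n,\epsilon)$-separated set be arranged as consecutive disjoint subarcs of a single leaf ball, so that a counting quantity can be majorized by a single volume. In higher dimension neither feature survives, and any analogous bound requires a more delicate covering argument that typically loses an exponential factor.
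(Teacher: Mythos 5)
Your proof is correct and follows essentially the same route as the paper's: both arguments order the $(n,\epsilon)$-separated points along the one-dimensional leaf and bound their number by the length of an iterated arc divided by the separation constant. The only cosmetic difference is that you adapt the metric so that leaf arc-length is monotone under $f$ and push every gap forward to time $n$, whereas the paper keeps the given metric and instead notes that the separation must occur among the last $k$ iterates, bounding the cardinality by a sum of $k$ length terms $\leng(f^{j}\widehat{K})/r$.
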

\begin{proof}
	Let $K\subset \cW(x)$ be a compact set, we have that $\cW(x)$ is homeomorphic to $ \mathbb{R} $ so we can take $\widehat{K}$ to be the smallest convex 
	interval that contains $K$. 
	
Observe that for every $n>0$, $f^{n}\widehat{K}$ is an interval on $\cW(f^n(x))$, and has at most $\frac{\leng\, f^{n}\widehat{K}}{r}$ $r$-separated points, where $\leng(I)$ is the length of the interval 	$I$ in $\cW$.	
	
	Take $k$ such that $\norm{Df^{-k}\vert_{T_x\cW}} < 1$, then 
for every $x,y\in \widehat{K}$, $\d_\cW(x,y)\leq \d(f^k(x),f^k(y))$.
So 
$$\d^n_x(y,z)=\max_{n-k\leq j< n}\d_\cW(f^j(y),f^j(z)),$$
then if $E$ is an $n , r$-separated set of maximal cardinality we have
	$$
	\# E\leq\, \frac{\leng\, f^{n}\widehat{K}}{r}+\frac{\leng\, f^{n-1}\widehat{K}}{r}+\cdots +\frac{\leng\, f^{n-k}\widehat{K}}{r}.$$

	So we conclude that
	$$
	\limsup_{n\to \infty}\frac{1}{n}\log\,s\left(K,n,r\right)\leq \limsup_{n\to \infty}\frac{1}{n}\log\left(\frac{\leng\, f^{n}\widehat{K}}{r}\right)\leq \chi_f^\cW.
	$$
	\end{proof}

\begin{lemma}\label{l.entropyineq}
Let $\cW$ be a $k$-dimensional manifold (not necessarily one dimensional) then $\ch_{\cW}\geq \chi_f^\cW$.
\end{lemma}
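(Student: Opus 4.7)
The plan is to reverse the inequality of the previous lemma by volume‑counting. Fix $x\in M$ and $r>0$, and take $K=\cW_r(x)$; I will show $g(K)\ge \chi(x,r)$, which after taking $\sup_x$ gives $\ch_\cW\ge \chi_f^\cW$ (using that $\chi_f^\cW|r$ is independent of $r$).

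The core observation is that an $(n,\epsilon)$-generator set for $K$ produces, via $f^n$, an $\epsilon$-cover of $f^n(K)$ by balls inside the leaf. Precisely, if $E\subset K$ is $(n,\epsilon)$-generator, then for every $z\in K$ there is $y\in E$ with
$$
d_\cW\bigl(f^n(y),f^n(z)\bigr)\le d_x^n(y,z)<\epsilon,
$$
so $f^n(K)\subset\bigcup_{y\in E}\cW_\epsilon(f^n(y))$ (note that all these balls lie in the single leaf $\cW(f^n(x))$, since $K\subset\cW(x)$ and the foliation is invariant).

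Next I use that the leaves are $k$-dimensional $C^\nu$ submanifolds varying continuously in foliated boxes, and $M$ is compact: this yields a uniform constant $C=C(\cW)>0$ and $\epsilon_0>0$ such that
$$
\vol_\cW\bigl(\cW_\epsilon(p)\bigr)\le C\epsilon^k\quad\text{for every }p\in M,\ \epsilon<\epsilon_0.
$$
Combining with the covering above gives
$$
\vol\bigl(f^n(K)\bigr)\le C\epsilon^k\cdot g(K,n,\epsilon),
$$
so that
$$
\frac{1}{n}\log g(K,n,\epsilon)\ge \frac{1}{n}\log \vol\bigl(f^n(K)\bigr)-\frac{1}{n}\log(C\epsilon^k).
$$
Taking $\limsup_{n\to\infty}$ yields $g(K,\epsilon)\ge\chi(x,r)$, and letting $\epsilon\to 0$ (the right side being independent of $\epsilon$) gives $g(K)\ge\chi(x,r)$. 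Finally taking the supremum over $x\in M$ (with $K=\cW_r(x)$) gives $\ch_\cW\ge \chi_f^\cW$.

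The one step that requires care is the uniform volume bound on small leaf balls. It is here that compactness of $M$ and the $C^\nu$ definition of the foliation enter: locally in each foliated box the map $x\mapsto \partial\Phi/\partial x$ is H\"older and hence bounded, so $\vol_\cW$ restricted to a ball of radius $\epsilon$ in a leaf is comparable to Lebesgue measure of a ball of the same radius in $\R^k$, uniformly in the base point. A finite sub‑cover of $M$ by foliated charts then yields the required uniform constant $C$. Everything else is a straightforward mimic of the classical proof that topological entropy dominates exponential volume growth.
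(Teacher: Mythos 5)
Your argument is correct and is essentially the same as the paper's proof: an $(n,\epsilon)$-generator set yields a cover of $f^n(\cW_r(x))$ by leafwise $\epsilon$-balls, whose volumes are bounded uniformly by $C\epsilon^k$, giving $\vol(f^n(\cW_r(x)))\leq C\epsilon^k g(\cW_r(x),n,\epsilon)$ and hence the inequality after taking logarithms and limits. You merely spell out the uniform volume bound (which the paper attributes to ``the curvature of $\cW$'') and the final passage to suprema in more detail.
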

\begin{proof}
Fix $y\in M$ and $R,r\in \mathbb{R}$ and let $E$ be and $n,r$-generator set of $\cW_R(y)$ with minimal cardinality, we have that 
$$
f^{n}\left(\cW_R(y)\right)\subset \bigcup_{x\in E} \cW_r(f^n(x)) ,
$$
Hence, we have that
$$
\begin{aligned}
\vol\,f^{n} \left( \cW_R(y) \right) & \leq  \sum_{x\in E} \vol \cW_r(f^{n}(x))\\
 &\leq  C\#E r^{k}
\end{aligned}
 $$
 where $C>0$ is some constant that depends on the curvature of $\cW$.
 
So we conclude that 
$$
\limsup_{n\to \infty}\frac{1}{n}\log\left(\vol f^{n}(\cW_R(y))\right)\leq \limsup_{n\to \infty}\frac{1}{n}\log\left(r^{k}\, g\left( \cW_R(y),n,r \right) \right).
$$

\end{proof}
As a consequence of lemmas \ref{l.entropyineq1} and \ref{l.entropyineq} we have
\begin{proposition}\label{p.entropyGG}
Let $\cW$ be a one-dimensional $f$-invariant expanding foliation, then $\ch_{\cW}= \chi_f^\cW$.
\end{proposition}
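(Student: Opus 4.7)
The proposition is designed so that it follows immediately from the two lemmas just established. Concretely, Lemma~\ref{l.entropyineq} gives one direction for an expanding invariant foliation of \emph{any} dimension $k$: covering $f^n(\cW_R(y))$ by $r$-balls centered at an $(n,r)$-generator set $E$ of minimal cardinality $g(\cW_R(y),n,r)$ produces the volume bound $\vol f^n(\cW_R(y)) \leq C (\# E) r^k$, and taking $\limsup \frac{1}{n}\log$, then sending $r\to 0$, and finally taking the sup over $y$ and over compact sets yields $\chi^\cW_f \leq \ch_\cW$. Lemma~\ref{l.entropyineq1} supplies the reverse inequality $\ch_\cW \leq \chi^\cW_f$, and it is here that the one-dimensionality is used essentially: in dimension one the convex hull $\widehat K$ of a compact $K\subset \cW(x)$ is an interval whose forward iterates remain intervals, so an $(n,r)$-separated subset is bounded in cardinality by a sum of $\leng f^{n-j}\widehat K / r$ for $0\le j\le k$, and these lengths have exponential growth rate at most $\chi^\cW_f$.

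The plan is therefore very short: invoke Lemma~\ref{l.entropyineq1} with $\cW$ to get $\ch_\cW\le \chi^\cW_f$, invoke Lemma~\ref{l.entropyineq} with $k=1$ to get $\chi^\cW_f \leq \ch_\cW$, and conclude equality. There is no genuine obstacle; the only thing to keep track of is that the sups over $x$, $K$, $R$, and the limits in $r$ and $\epsilon$ match on the two sides, but this has already been absorbed into the definitions of $\chi^\cW_f$ (which the paper has noted does not depend on the radius $r$) and of $\ch_\cW = \sup_x \sup_K s(K) = \sup_x \sup_K g(K)$.

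I want to emphasize that the one-dimensional hypothesis is only used for the upper bound $\ch_\cW \leq \chi^\cW_f$; the lower bound holds in every dimension. This is exactly why the paper states the proposition under the restriction $\dim \cW =1$, and in higher dimensions the convex-hull/interval argument of Lemma~\ref{l.entropyineq1} has no analogue, so one should not expect equality in general.
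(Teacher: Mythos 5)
Your proposal is correct and is exactly the paper's argument: the paper derives Proposition~\ref{p.entropyGG} immediately by combining Lemma~\ref{l.entropyineq1} (the inequality $\ch_{\cW}\leq \chi_f^{\cW}$, which uses one-dimensionality) with Lemma~\ref{l.entropyineq} (the reverse inequality, valid in any dimension). Your additional remarks on where the dimension hypothesis enters match the structure of the paper's two lemmas.
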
 
\section{The linear part} \label{s.linear}
Let $f:\torus\to\torus$ be an Anosov map as in the hypothesis of theorem~\ref{teo}. Let $F:\real^3\to\real^3$ be the lift of $f$ to $\real^3$ and $A\in \SL$ be its linear part, this means that there exists some $p:\real^3\to\real^3$, with $p(x+n)=p(x)$ for every 
$x\in\real^3$ and $n\in\integer^3$, such that $F=A+p$.
 
By a classical result of A. Manning (\cite{Mann74}) the linear part of a partially hyperbolic Anosov map, $A$, has $3$ different eigenvalues $\alpha^{uu}$, $\alpha^{wu}$ and $\alpha^s$ with $\lambda^{uu}_A> \lambda^{wu}_A>0>\lambda^{s}_A$, where $\lambda_A^i=\log \abs{\alpha^i}$ for $i=uu,wu,s$.

 Also, a classical result (see for example \cite{KaH95}) states that there exists a conjugacy 
$$
H:\real^3\to\real^3,\quad\text{ such that }H\circ F=A\circ H,$$
 at finite distance of the identity that also induce a conjugacy 
 $$
 h:\torus\to\torus,\quad h\circ f=A\circ h.
 $$

Observe that $A:\torus\to \torus$ is a partially hyperbolic Anosov map with invariant manifolds $\cW^i_A(x)=\{x\}+E^i_A$ where $E^i_A$ is the eigenspace corresponding to $\lambda^i_A$, for $i=uu,wu,s$.

By \cite[proposition~2.3]{Pot14} we have that:
\begin{itemize}
\item $h(\cW^s_f(x))=\cW^s_A(h(x))$,
\item $h(\cW^{uu}_f(x))\subset \cW^u_A(h(x)):=\{x\}+E^{uu}_A+E^{wu}_A$,
\item $h(\cW^{wu}_f(x)=\cW^{wu}_A(h(x))$,
\item $h(\cW^u_f(x))=\cW^u_A(h(x))$.
\end{itemize}

The following results are going to be used for each of the invariant foliations so from now on lets suppose that $\cW$ is a one dimensional $f$-invariant foliation and $h$ takes the leaves of $\cW$ to $A$-invariant manifold $E_A$ (a translation of an eigenspace). 

The purpose of this section is to prove:
\begin{theorem}\label{t.cr}
 If  $h$ takes any one dimensional expanding $f$-invariant $C^\nu$ foliation $\cW$ (for example $\cW^{uu}$ or $\cW^{wu}$) to an $A$-invariant manifold $E_A$ and there exists an 
 $f$-invariant measure $m$ absolutely continuous on $\cW$ such that $\chi^\cW_f=\lambda^m_{\cW}(x)$, $m$-almost everywhere, then for every $x\in M$, $h$ is $C^\nu$ restricted to $\cW(x)$.
\end{theorem}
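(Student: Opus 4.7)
The strategy is to use Ledrappier's entropy formula (Theorem~\ref{t.ledrappier}) applied to both $f$ and its linear part $A$ to force the conditional measures of $m$ and of $h_* m$ on the respective one-dimensional leaves to match under $h$, and then read off the regularity of $h$ along each leaf from that matching.

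Because $m$ is absolutely continuous on $\cW$ and $\lambda^m_\cW = \chi^\cW_f$ by hypothesis, Theorem~\ref{t.ledrappier} gives $\ch_m(f,\cW) = \chi^\cW_f$. Proposition~\ref{p.entropyGG} identifies $\chi^\cW_f$ with the topological quantity $\ch_\cW$, which is invariant under the conjugacy $h$: since $h$ sends $\cW$-leaves homeomorphically onto leaves of the linear foliation $\{E_A(y):y\in\torus\}$, the topological leaf-entropies of $f$ on $\cW$ and of $A$ on $E_A$ coincide. For the linear map $A$ every $E_A$-leaf is expanded uniformly by $|\alpha|=e^{\lambda_A}$, so this topological entropy equals $\lambda_A$. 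Therefore $\ch_m(f,\cW)=\lambda_A$.

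Next consider the $A$-invariant measure $h_*m$. The conjugacy $h$ transports any $\cW$-adapted subordinate partition $\xi$ to an $E_A$-adapted partition $h_*\xi$, and by uniqueness of disintegration the conditional measures push forward accordingly; hence the $E_A$-entropy of $h_*m$ equals $\ch_m(f,\cW)=\lambda_A$. Since $A$ is linear, every $A$-invariant measure has Lyapunov exponent exactly $\lambda_A$ along $E_A$, so we are in the equality case of Theorem~\ref{t.ledrappier} applied to $A$, and $h_*m$ is absolutely continuous on the linear foliation. Remark~\ref{r.rho} then describes its conditional density as $\tilde\rho\,d\vol_{E_A}$ with $\tilde\rho(y)/\tilde\rho(z)=\Delta_A(y,z)$; but $J^{E_A}A\equiv|\alpha|$ is constant, hence $\Delta_A\equiv 1$ and $\tilde\rho$ is constant on each leaf, i.e.\ the conditionals of $h_*m$ on $E_A$-leaves are proportional to Lebesgue.

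On the $f$-side, Remark~\ref{r.rho} gives the conditionals of $m$ as $\rho\,d\vol_\cW$ with $\rho(x)/\rho(y)=\Delta(x,y)$, and since $f$ is $C^\nu$ and $\cW$ is uniformly expanded the telescoping series defining $\log\Delta$ converges in $C^{\nu-1}$ along each leaf, so $\rho$ is a positive $C^{\nu-1}$ function on each leaf. Pushing the $m$-conditional along $h|_{\cW(x)}$ and comparing with the previous paragraph yields $(h|_{\cW(x)})_*(\rho\,d\vol_\cW)=c_x\,d\vol_{E_A}$ for some positive constant $c_x$; for a homeomorphism of one-dimensional intervals this identity forces $h|_{\cW(x)}$ to be $C^1$ with derivative $\rho/c_x$, and combining with the $C^{\nu-1}$ regularity of $\rho$ upgrades this to $C^\nu$. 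The extension to every leaf, not just $m$-a.e.\ one, uses that $m$ has full support, so each leaf meets a foliated box in which the conditional density is well-defined and varies continuously in the transverse direction (second part of Remark~\ref{r.rho}). The step I expect to be most delicate is the middle paragraph: one must check that pushing a $\cW$-adapted partition by $h$ preserves both the adaptedness condition and the conditional entropy $\cH(f^{-1}\xi\mid\xi)$, so that the equality case of Ledrappier's theorem can legitimately be invoked on the linear side.
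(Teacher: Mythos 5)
Your proposal is correct and follows essentially the same route as the paper: topological leaf-entropy invariance under $h$ plus Proposition~\ref{p.entropyGG} to identify $\chi^\cW_f$ with $\lambda_{E_A}$, the transported adapted partition to put $h_*m$ in the equality case of Theorem~\ref{t.ledrappier} for $A$ (the paper's Lemma on $h\xi$), the constant density on the linear side versus the $C^{\nu-1}$ density $\rho$ from the telescoping product on the $f$-side (Lemma~\ref{l.cr}), the integral formula forcing $h|_{\cW(x)}$ to be $C^\nu$ for $m$-a.e.\ $x$ (Proposition~\ref{p.abscont}), and the extension to all leaves via full support and the transverse weak$^*$ continuity of Remark~\ref{r.rho}. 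The ``delicate step'' you flag is precisely the paper's lemma verifying that $h\xi$ is adapted and that the conditional entropies coincide, and it is handled there exactly as you anticipate.
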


Observe that, as $E_A$ is dense in $\torus$ and $h$ is a homeomorphism, $\cW(x)$ is also a dense for any $x\in M$, a foliation with this property is called minimal. 

The support of a measure $m$ absolutely continuous on $\cW$ is saturated by $\cW$, then by the minimality of this foliation and proposition~\ref{p.leddens} the measure is fully supported. 

Now, lets state some lemmas in order to prove the theorem.
\begin{lemma}\label{l.entropyinv}
We have that $\ch_{\cW}^{f}=\ch_{E_A}^{A}$.
\end{lemma}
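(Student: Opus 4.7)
The natural strategy is to reduce the statement to Proposition~\ref{p.entropyGG}, which identifies the $\cW$-entropy with the geometric growth; it then suffices to prove $\chi_f^\cW = \chi_A^{E_A}$. The linear side is immediate: $A$ acts on $E_A$ by multiplication by the corresponding eigenvalue $\alpha$, so any leaf-interval of length $\ell$ is sent to one of length $|\alpha|^n \ell$, giving $\chi_A^{E_A} = \log|\alpha|$.

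To compute $\chi_f^\cW$ I would lift everything to the universal cover. The lift $H:\real^3\to\real^3$ satisfies $H\circ F = A\circ H$ and $\|H-\id\|_\infty \leq C$ for some $C>0$ (a classical consequence of $F-A$ being bounded on $\real^3$). Since $h$ sends $\cW$-leaves to $E_A$-leaves, $H$ takes each $\widetilde{\cW}$-leaf homeomorphically onto an affine line parallel to $E_A$. For any $\tilde x$, the image $H(\widetilde{\cW}_r(\tilde x))$ is then a compact interval of some length $\ell_0(x,r)$, and the conjugacy gives
\[
H(F^n\widetilde{\cW}_r(\tilde x))=A^n H(\widetilde{\cW}_r(\tilde x)),
\]
an interval on $E_A(HF^n\tilde x)$ of length exactly $|\alpha|^n \ell_0(x,r)$.

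From this identity I would extract both inequalities. For the lower bound, the endpoints of $F^n\widetilde{\cW}_r(\tilde x)$ have Euclidean distance at least $|\alpha|^n \ell_0(x,r) - 2C$; since leaf-length dominates chord length, $\vol(F^n\widetilde{\cW}_r(\tilde x)) \geq |\alpha|^n\ell_0(x,r)-2C$, yielding $\chi_f^\cW \geq \log|\alpha|$. The matching upper bound requires controlling leaf-length \emph{from above} by Euclidean diameter up to affine constants, which is the quasi-isometric property of the invariant foliation $\widetilde{\cW}$ in the universal cover. This holds in the setting of partially hyperbolic Anosov systems on tori (following Brin--Burago--Ivanov, Hammerlindl, Potrie) and in fact follows quite directly here from the boundedness of $H-\id$ together with $H$ sending $\widetilde{\cW}$-leaves to straight lines.

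The main obstacle is precisely this upper bound: without the quasi-isometry of leaves a long leaf-interval could in principle fold back on itself, making leaf-length much larger than the chord between endpoints, and the strategy would only deliver $\chi_f^\cW \geq \log|\alpha|$. Once the quasi-isometry is in place, the conclusion is immediate: $\ch_{\cW}^f = \chi_f^\cW = \log|\alpha| = \chi_{E_A}^A = \ch_{E_A}^A$.
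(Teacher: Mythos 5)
Your overall architecture --- reduce to Proposition~\ref{p.entropyGG} and prove $\chi_f^{\cW}=\chi_A^{E_A}$ directly in the universal cover --- is genuinely different from the paper's argument, and it is much heavier than necessary. The paper introduces $\ch_{\cW}$ precisely because it is a \emph{topological} quantity: its proof of Lemma~\ref{l.entropyinv} is the standard two-line conjugacy-invariance argument (uniform continuity of $h$ sends $n,\delta$-generating sets of $K$ to $n,\varepsilon$-generating sets of $h(K)$, giving $g_A(h(K),n,\varepsilon)\leq g_f(K,n,\delta)$, and symmetrically for $h^{-1}$). That argument needs no lift, no eigenvalue computation, and no geometric control of the leaves; it also works for foliations of any dimension, which matters because the lemma is reused in the subsequent theorem on $\mathbb{T}^n$ where $\cW^{wu}$ need not be one-dimensional, whereas your route is confined to the one-dimensional case through Lemma~\ref{l.entropyineq1}.

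More importantly, there is a genuine gap in your upper bound. You correctly identify that you need $\leng_{\widetilde{\cW}}(x,y)\leq a\norm{x-y}+b$ for $x,y$ on a common lifted leaf, but the claim that this ``follows quite directly from the boundedness of $H-\id$ together with $H$ sending $\widetilde{\cW}$-leaves to straight lines'' is false as a derivation. Those two facts only tell you that each leaf lies within Hausdorff distance $C$ of an affine line and is mapped homeomorphically onto it; they put no bound on arc length, since the leaf could a priori oscillate arbitrarily within the $C$-neighborhood of the line, making $\leng_{\widetilde{\cW}}(x,y)$ unbounded while $\norm{x-y}$ stays small. Quasi-isometry of the lifted invariant foliations is a real theorem in this setting (Brin--Burago--Ivanov for absolute partial hyperbolicity, Potrie and Hammerlindl for pointwise partial hyperbolicity on $\mathbb{T}^3$), so your proof can be repaired by citing it honestly as an external input rather than deriving it from $\norm{H-\id}_\infty\leq C$; but at that price you have imported a substantially deeper result to prove a statement the paper obtains from uniform continuity alone. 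Your lower bound ($\chi_f^{\cW}\geq\log\abs{\alpha}$ via chord length) is correct.
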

\begin{proof}
The foliation $\cW$ is a continuous foliation with $C^{1}$ leaves, so there exist $\delta > 0$ and $Q>0$ such that if 
$\d(x,y)<\delta$, with $y\in \cW_{loc}(x)$, then $\d_{\cW}\left(x,y\right)\leq Q \d\left(x,y\right)$.

By the uniform continuity of $h$, given $K\subset \cW(x)$ compact and $\varepsilon>0$, there exists $\delta>0$ such that, for every $x,y\in K$, if $\d_{\cW_{f}}(x,y)<\delta$ then $\d_{E_A}\left(h(x),h(y)\right)<\varepsilon$. 
We claim that if $E$ is a $n,\delta$-generator set of $K$ then $h(E)$ is a $n,\varepsilon$-generator set of $h(K)$.

Indeed, for every $y\in K$ there exist $x\in E$ such that $\d_{\cW}(f^{j}(x),f^{j}(y))<\delta$, we have that $\d_{E_A}\left(h\circ f^{j}(x),h\circ f^{j}(y)\right)<\varepsilon$ then
$\d_{E_A}\left(A^{j}h(x),A^{j}h(y)\right)<\varepsilon$, proving our claim.

This implies that $g_{A}\left(h(K),n,\varepsilon \right)\leq g_{f}\left(K,n,\delta\right)$ so $g_{A}\left(h(K),\varepsilon\right)\leq g_{f}\left(K,\delta\right)$, observe that if 
$\varepsilon \rightarrow 0$ then $\delta \rightarrow 0$, also $h$ is a homeomorphism so taking the supremum for every compact set contained in $E_A$ is the same of taking the supremum over $h(K)\subset E_A$ with $K\subset \cW$ compact.
 
Then we conclude that $\ch_{E_A}^{A}\leq \ch_{\cW}^{f}$.
The same reasoning using $h^{-1}$ gives $\ch_{E_A}^{A} \geq \ch_{\cW_A}^{f}$.
\end{proof}

As a corollary we have the next result interesting on itself (for this result we do not need the torus to be $3$-dimensional, neither the invariant manifolds to be one dimensional).

\begin{theorem}
Let $f:\mathbb{T}^n\to \mathbb{T}^n$ be a Partially Hyperbolic Anosov map, then for any measure absolutely continuous in the weak unstable direction, the Lyapunov exponents in this direction are not bigger than the corresponding of the linear part.
\end{theorem}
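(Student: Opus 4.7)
The strategy is to concatenate three ingredients already in place: Saghin--Xia's inequality $\Delta_{\cW^{wu}}\leq \chi^{\cW^{wu}}_f$, the inequality $\chi^{\cW^{wu}}_f\leq \ch^f_{\cW^{wu}}$ coming from Lemma~\ref{l.entropyineq}, and the conjugacy-invariance $\ch^f_{\cW^{wu}}=\ch^A_{E^{wu}_A}$ provided by Lemma~\ref{l.entropyinv}. The fourth and last step is to evaluate $\ch^A_{E^{wu}_A}$ explicitly for the linear model $A$.

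Fix an $f$-invariant measure $m$ absolutely continuous along $\cW^{wu}$. Saghin--Xia gives
\begin{equation*}
\int \sum_{E^i\subset T\cW^{wu}} \dim E^i \cdot \lambda^i_{\cW^{wu}}\, dm \;\leq\; \chi^{\cW^{wu}}_f.
\end{equation*}
Lemma~\ref{l.entropyineq} is stated in arbitrary dimension, hence $\chi^{\cW^{wu}}_f\leq \ch^f_{\cW^{wu}}$ applies directly to $\cW^{wu}$. The conjugacy $h$ sends the leaves of $\cW^{wu}_f$ into leaves of the affine foliation $E^{wu}_A$; the proof of Lemma~\ref{l.entropyinv} is a soft uniform-continuity comparison between the $\d^n_x$-metrics for $f$ and for $A$, and uses no one-dimensionality, so it applies verbatim to yield $\ch^f_{\cW^{wu}}=\ch^A_{E^{wu}_A}$.

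The remaining point is to compute $\ch^A_{E^{wu}_A}$. Each leaf of $E^{wu}_A$ is an affine translate of the linear subspace $E^{wu}_A\subset\real^n$, on which $A$ acts linearly with positive Lyapunov exponents $\lambda^i_A$ ($E^i_A\subset E^{wu}_A$). Counting $(n,\epsilon)$-separated sets in a compact piece of such a leaf is the classical topological-entropy count for a linear expanding map; a straightforward volume-comparison shows
\begin{equation*}
\ch^A_{E^{wu}_A} \;=\; \sum_{E^i_A\subset E^{wu}_A} \dim E^i_A \cdot \lambda^i_A.
\end{equation*}
Chaining the three displays gives
\begin{equation*}
\int \sum_{E^i\subset T\cW^{wu}} \dim E^i \cdot \lambda^i_{\cW^{wu}}\, dm \;\leq\; \sum_{E^i_A\subset E^{wu}_A} \dim E^i_A \cdot \lambda^i_A,
\end{equation*}
which is the announced bound on the weak-unstable Lyapunov exponents of $m$ by the corresponding exponents of the linear part.

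The only place where some care is needed is the observation that Lemmas~\ref{l.entropyineq} and~\ref{l.entropyinv} are genuinely dimension-free, so they apply to the possibly higher-dimensional weak unstable foliation; this is pure bookkeeping rather than a real obstacle, so no new machinery beyond what Sections~\ref{s.entropy}--\ref{s.linear} already provide is required.
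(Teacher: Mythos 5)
Your proposal is correct and takes essentially the same route as the paper: the chain Saghin--Xia $\Rightarrow$ Lemma~\ref{l.entropyineq} $\Rightarrow$ Lemma~\ref{l.entropyinv}, followed by the explicit evaluation of the entropy of the linear model, which the paper records as $\ch^{A}_{E^{wu}_A}=\chi^{wu}_A=\log \det A\mid_{E^{wu}_A}$, the sum of the logarithms of the eigenvalues of $A$ in the weak unstable direction. Your added remark that the two lemmas are dimension-free matches the paper's own observation preceding the statement.
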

\begin{proof}
Let $\mu$ be an invariant measure such that $\cW^{wu}$ is absolutely continuous. By \cite{SX09}, $\lambda^\mu_{\cW^{wu}}(x)\leq \chi^f_{\cW^{wu}}$, also by lemma~\ref{l.entropyineq} and lemma~\ref{l.entropyinv} we have that $\lambda^\mu_{\cW^{wu}}(x)\leq  \ch^A_{E^{wu}_A}$.

As $E^{wu}_A$ is an invariant linear subspace $\ch^A_{E^{wu}_A}=\chi^{wu}_A=\log \det A\mid_{E^{wu}_A}$, this is equal to the sum of the logarithm of the eigenvalues of $A$ in the direction $E^{wu}$.
\end{proof}

When $E^{wu}$ is one dimensional, by proposition~\ref{p.entropyGG} we have that $\chi^{E_A}_{A}=\chi^{\cW}_{f}=:\chi$ and, because $A$ is linear and $E_A$ is an one-dimensional eigenspace, the Lyapunov exponent of $A$, the geometric growth and the logarithm of the eigenvalue coincide.

From now on we are going to assume that $\chi_f^\cW=\lambda_\cW^m$, so we have
\begin{equation}\label{eq.equality}
\chi=\lambda_{E_A}=\lambda_\cW^m.
\end{equation}

Using an $f$ subordinated partition $\xi$, as defined section~\ref{s.partitions}, we can define a partition $h\xi$, by 
$$
h\xi(x)=h\left(\xi(h^{-1}(x))\right),$$ 
this 
partition has the following properties:

\begin{lemma}
If $\xi$ is an adapted $f$-subordinated measurable partition of $\cW_{f}$ then $h\xi$ is an adapted $A$-subordinated measurable partition of $E_{A}$ and the Rokhlin 
disintegration of $\hat{m}:=h_{\ast}m$ is $\hat{m}^{h\xi}_{x}=h_{\ast}m^{\xi}_{h^{-1}(x)}$.
\end{lemma}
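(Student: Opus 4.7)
The plan is to verify each of the defining properties in turn, using only that $h$ is a homeomorphism of $\torus$ intertwining $f$ with $A$ and sending leaves of $\cW$ onto leaves of $E_A$.

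First, I would check that $h\xi$ is a measurable partition subordinated to $E_A$. Measurability is immediate because $h$ is a homeomorphism, so $C_{h\xi}(x) = h(C_\xi(h^{-1}(x)))$ depends measurably on $x$ and the family of sets $\{C_{h\xi}(x)\}$ partitions $\torus$. For subordination, the hypothesis $C_\xi(y) \subset \cW(y)$ for $m$-a.e.\ $y$ translates, after setting $y = h^{-1}(x)$ and using $h(\cW(y)) = E_A(h(y))$, into $C_{h\xi}(x) \subset E_A(x)$ for $\hat m$-a.e.\ $x$, since $h_* m = \hat m$.

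Second, I would verify that $h\xi$ is adapted. That $C_{h\xi}(x)$ is a neighborhood of $x$ in $E_A(x)$ follows because $h$ restricted to the leaf $\cW(h^{-1}(x))$ is a homeomorphism onto $E_A(x)$ (both are copies of $\R$ in which $h$ is a continuous injective surjection between them), so it carries neighborhoods of $h^{-1}(x)$ to neighborhoods of $x$. The refinement property $A^{-1}(h\xi) \succ h\xi$ is obtained by rewriting $A^{-1}\circ h = h\circ f^{-1}$, which gives $A^{-1}(h\xi) = h(f^{-1}\xi)$, and then invoking $f^{-1}\xi \succ \xi$ together with the fact that $h$, being a bijection, preserves refinement of partitions.

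Third, I would verify the disintegration formula by checking the four characterizing properties of $\{h_* m^\xi_{h^{-1}(x)}\}_{x\in\torus}$ listed after the definition of Rokhlin disintegration. Constancy on $C_{h\xi}(x)$ follows because, for $x' \in C_{h\xi}(x)$, we have $h^{-1}(x') \in C_\xi(h^{-1}(x))$, so $m^\xi_{h^{-1}(x')} = m^\xi_{h^{-1}(x)}$. The normalization $(h_* m^\xi_{h^{-1}(x)})(C_{h\xi}(x)) = m^\xi_{h^{-1}(x)}(C_\xi(h^{-1}(x))) = 1$ is immediate. Measurability of $x \mapsto (h_* m^\xi_{h^{-1}(x)})(A) = m^\xi_{h^{-1}(x)}(h^{-1}(A))$ follows from measurability for $\xi$ and continuity of $h^{-1}$. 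Finally, for any measurable $A$,
\[
\int (h_* m^\xi_{h^{-1}(x)})(A)\, d\hat m(x) = \int m^\xi_y(h^{-1}A)\, dm(y) = m(h^{-1}A) = \hat m(A),
\]
where the first equality is the change of variables $y = h^{-1}(x)$, and the second is the disintegration identity for $m$ with respect to $\xi$.

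No step is really a main obstacle here; everything is a bookkeeping exercise in functoriality of pushforward under the conjugacy. The only mildly delicate point is confirming that $h$ carries a leaf-neighborhood of $h^{-1}(x)$ to a leaf-neighborhood of $x$, but this is built in because $h$ is a global homeomorphism between the ambient spaces and sends the (one-dimensional) leaves of $\cW$ bijectively and continuously onto the leaves of $E_A$.
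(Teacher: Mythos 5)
Your proposal is correct and follows essentially the same route as the paper: use the conjugacy relation $A^{-1}\circ h = h\circ f^{-1}$ to transfer the refinement property, note that $h$ maps leaves to leaves and leaf-neighborhoods to leaf-neighborhoods, and then identify $h_*m^{\xi}_{h^{-1}(x)}$ as the disintegration of $h_*m$ by verifying the characterizing properties and invoking uniqueness of the Rokhlin disintegration. The only difference is cosmetic: you check all four defining properties explicitly, while the paper verifies the normalization and the integral identity and leaves the constancy and measurability implicit.
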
 
\begin{proof}
It is clear that $h\left(\xi(h^{-1}(x))\right)\in h\left(\cW(h^{-1}(x)\right)=E_{A}(x)$, and 
\begin{eqnarray*}
A^{-1}\left(h\xi\right)\left(Ax\right)&=&\left(A^{-1}h\right)\xi\left(h^{-1}Ax\right)\\
&=& h\left(f^{-1}\xi\left(f\,h^{-1}(x)\right)\right) \\
&\subset& h\left(\xi\left(h^{-1}(x)\right)\right),
\end{eqnarray*}
also open neighborhoods goes to open neighborhoods because $h$ is a homeomorphism.

Now for the Rokhlin disintegration, observe that
\begin{equation*}
h_{\ast}m^{\xi}_{h^{-1}(x)}\left(h(\xi(h^{-1}(x))\right)=m^{\xi}_{h^{-1}(x)}\left(\xi(h^{-1}(x))\right)=1,
\end{equation*}
also given any mensurable set $B$
\begin{eqnarray*}
\int h_{\ast}m^{\xi}_{h^{-1}(x)}(B)d(h_{\ast}m) &=& \int m^{\xi}_{h^{-1}(x)}\left(h^{-1}\left(B\right)\right)d(h_{\ast}m)\\
 &=& \int m^{\xi}_{x}\left(h^{-1}\left(B\right)\right)d m\\
 &=& m\left(h^{-1}B\right)\\
 &=& h_{\ast}m\left(B\right). 
\end{eqnarray*}

So the result follows by the uniqueness of the Rokhlin disintegration
\end{proof}

We have that the metric entropy in the $E_A$ manifold for $h_{\ast}m$ is 
\begin{eqnarray*}
\ch_{h_\ast m}^{A}&=& -\int \log \hat{m}^{h\xi}\left(A^{-1}(h\xi)\right)d(h_{\ast} m)\\
 &=& -\int \log h_\ast m^{\xi}\left(A^{-1}\left(h\xi\left(h^{-1}x\right)\right)\right)d(h_\ast m)\\
&=& -\int \log m^{\xi}\left(f^{-1}\xi(x)\right) dm \\
&=&\chi,\\
\end{eqnarray*}
the last equality follows by the fact that $m$ is absolutely continuous in the $\cW$ direction, theorem~\ref{t.ledrappier} and \eqref{eq.equality}.

Then $h_{\ast}m=\lambda_{E_A}$ so applying Proposition~\ref{p.leddens}, the measure $h_{\ast} m$ is absolutely continuous in the $E_A$ direction with $\Delta_A(y,t)=1$ (because the Jacobian is constant for the linear map).

We need to prove that, restricted to $\cW$, $h$ preserves the density $\rho$, and for every 
$t\in\cW(y)$, $\Delta_{f}(y,t)=\prod \frac{J^{\cW} f(f^{-j}(y))}{J^{\cW} f(f^{-j}(t))}$ is continuous.
 
Actually if $f$ and $\cW$ are $C^{r}$ then $\Delta_{f}(y,t)=\prod \frac{J^{\cW} f(f^{-j}(y))}{J^{\cW} f(f^{-j}(t))}$ is $C^{r-1}$  
with $r>1$, for $r=1$ we need $f$ to be at least $C^{1+\alpha}$, for some $\alpha>0$.  

The proofs of the next lemmas can be found in \cite{Lla92}, for completeness we redo the proofs here.

\begin{lemma}\label{l.cr}
  $\Delta_{f}(y,t)=\prod \frac{J^{\cW} f(f^{-j}(y))}{J^{\cW} f(f^{-j}(t))}$ is $C^{r-1}$ function in the $\cW$ direction if $f$ is $C^{r}$ with $r>1$, for
  $r=1$ is $C^{0}$ if $f$ is $C^{1+\alpha}$.
\end{lemma}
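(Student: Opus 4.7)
The plan is to take logarithms and estimate the resulting infinite series. For $y,t$ on the same leaf $\cW(x)$, set $\phi := \log J^{\cW} f$ and write
\[
\log \Delta_f(y,t) \;=\; \sum_{j=1}^{\infty}\bigl[\phi(f^{-j}y)-\phi(f^{-j}t)\bigr].
\]
Since $f$ and $\cW$ are $C^r$, the restriction of $\phi$ to each leaf is $C^{r-1}$; since $\cW$ is expanding for $f$ with rate $\lambda>1$, the map $f^{-1}|_{\cW}$ contracts leaf distances by $\lambda^{-1}$, so $d_{\cW}(f^{-j}y,f^{-j}t)\le C\lambda^{-j} d_{\cW}(y,t)$. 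Moreover, by the standard distortion estimate for $C^r$ expanding one-dimensional dynamics, for each integer $\ell\ge 1$ the $\ell$-th leaf derivative of $f^{-j}$ satisfies $\bigl\|(f^{-j})^{(\ell)}\bigr\|\le C_{\ell}\lambda^{-j}$ uniformly in the basepoint.

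For $r = 1+\alpha$, $\alpha\in(0,1)$, the function $\phi$ is $\alpha$-H\"older along leaves, hence
\[
\bigl|\phi(f^{-j}y)-\phi(f^{-j}t)\bigr| \;\le\; C\,d_{\cW}(f^{-j}y,f^{-j}t)^{\alpha}\;\le\;C'\lambda^{-j\alpha}\,d_{\cW}(y,t)^{\alpha},
\]
so the series converges uniformly and absolutely in $(y,t)$, giving a continuous function of $t$ along $\cW$; exponentiating shows $\Delta_f$ is $C^{0}$ along $\cW$.

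For general $r>1$, differentiate each summand $k$ times in the leaf direction, $1\le k\le \lfloor r-1\rfloor$, using Fa\`a di Bruno's formula. The $k$-th derivative is a sum of monomials, each of which is a derivative of $\phi$ of order $\le k$ evaluated at $f^{-j}y$ (uniformly bounded) times leaf derivatives of $f^{-j}$ whose orders sum to $k$. By the distortion bound above, every such monomial carries at least one factor of size $O(\lambda^{-j})$, and in fact, since the total order of derivatives of $f^{-j}$ is exactly $k\ge 1$, one obtains a bound of the form $C_k \lambda^{-j} d_{\cW}(y,t)^{0}$ for the difference of $k$-th derivatives at $y$ and $t$ (after applying the mean value theorem in the leaf). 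If $r-1$ is non-integer, the remaining fractional regularity is handled by applying the H\"older estimate of the previous paragraph to the top derivative $\phi^{(\lfloor r-1\rfloor)}$, introducing an extra factor $\lambda^{-j\{r-1\}}$. Summing over $j$ gives uniform convergence of all derivatives up to order $r-1$, so $\log\Delta_f$ (and hence $\Delta_f$) is $C^{r-1}$ along $\cW$.

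The main obstacle is the bookkeeping at high orders: one must verify that every monomial produced by the Fa\`a di Bruno expansion for $\frac{d^k}{dy^k}(\phi\circ f^{-j})|_{\cW}$ is $O(\lambda^{-j})$, which amounts to a bounded-distortion statement to all orders for $C^r$ expanding dynamics on a one-dimensional leaf. This is an inductive but routine exercise; once it is in hand, the telescoping difference $\phi(f^{-j}y)-\phi(f^{-j}t)$ and the geometric factor $\lambda^{-j}$ combine to dominate the series by a summable geometric tail in each of the required norms.
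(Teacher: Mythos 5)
Your proof is correct and follows essentially the same route as the paper: take logarithms, use the H\"older continuity of $\log J^{\cW}f$ together with the contraction $d_{\cW}(f^{-j}y,f^{-j}t)\le C\lambda^{-j}d_{\cW}(y,t)$ for the $C^{1+\alpha}$ case, and for $r>1$ differentiate the series term by term using the bound $\|Df^{-j}|_{T\cW}\|\le C\lambda^{-j}$ to get uniform convergence of the derivatives. The only difference is cosmetic: the paper applies the product rule to $\Delta_n$ directly and dismisses higher-order derivatives with ``similar arguments,'' whereas you work with $\log\Delta$ and name the all-orders distortion estimate (via Fa\`a di Bruno) that those similar arguments actually require.
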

\begin{proof}
 Let  
 \begin{equation*}
  \log\Delta_{n}(y,t)=\sum_{j=0}^{n-1}\left( \log J^{\cW} f(f^{-j}(y))-\log J^{\cW} f(f^{-j}(t))\right).
 \end{equation*}
  If $f$ is $C^{1+\alpha}$ then $J^{u}(x)$ is $\alpha$ H\"older,
  hence 
  \begin{equation*}
  \left( \log J^{\cW} f(f^{-j}(y)-\log J^{\cW} f(f^{-j}(t)\right)\leq C \lambda^{-j\alpha} d(x,y)^{\alpha}
 \end{equation*}
 So, $\Delta_n$ converges absolutely, this implies the continuity of $\Delta$.
 
 If $r>1$ then $\log J^{\cW} f(y)$ is $C^{r-1}$ restricted to $\cW$, and
 \begin{equation*}
  \frac{dJ^{\cW}f\circ f^{-j}}{d\cW}=D\log J^{\cW}f(f^{-j}(x))Df^{-j}(x)\mid_\cW,
 \end{equation*}
 so 
   \begin{equation}\label{eq.deriv}
  \norm{\frac{d\log J^{\cW}f \circ f^{-j}}{d\cW}(x)}\leq C\lambda^{-j}\norm{v^{u}}.
 \end{equation}
Then
 $$
\frac{d}{dy}\Delta_n(y,t)=\sum_{k=0}^{n-1} \frac{1}{\log J^{\cW}f(f^{-k}(t))}\prod_{j\neq k} \frac{\log J^{\cW}f(f^{-j}(y))}{\log J^{\cW}f(f^{-j}(t))}\frac{dJ^{\cW}f\circ f^{-k}}{d\cW} 
 $$
 by the first part $\prod_{j\neq k} \frac{\log J^{\cW}f(f^{-j}(y))}{\log J^{\cW}f(f^{-j}(t))}$ is uniformly bounded, so using~\eqref{eq.deriv} we conclude that the derivative also converges uniformly.
 Similar arguments for the derivatives of superior order shows that all the derivatives converges uniformly.
\end{proof}

\begin{proposition}\label{p.abscont}
For $m$ almost every $x$ we have that $h:\cW_{f}(x)\rightarrow E_{A}(h(x))$ is $C^{r}$.
\end{proposition}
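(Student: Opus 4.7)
The plan is to exploit that on the atom $C_\xi(x)$ of an adapted $\cW$-subordinated partition, $h$ restricts to a monotone homeomorphism between one-dimensional arcs that pushes a $C^{r-1}$ density to a constant density. A direct one-dimensional argument then yields $C^r$ regularity of $h$ on each atom, and $C^r$ regularity on the entire leaf follows from the conjugacy $h = A^n \circ h \circ f^{-n}$ together with the uniform $\cW$-contraction of $f^{-n}$.

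First I would identify the two densities. On the $f$-side, Proposition \ref{p.leddens} and Remark \ref{r.rho} give $m^\xi_x = \rho\, d\vol_{\cW(x)}$ with $\rho(y)/\rho(z)=\Delta_f(y,z)$; Lemma \ref{l.cr} says that $\Delta_f$ is $C^{r-1}$ along $\cW$, so after normalization $\rho$ is a strictly positive $C^{r-1}$ function on $C_\xi(x)$. On the $A$-side, the computation immediately preceding the proposition shows that $h_\ast m$ achieves equality in Theorem \ref{t.ledrappier} for the pair $(A,E_A)$, so Proposition \ref{p.leddens} applies; since $A$ is linear on $E_A$ the Jacobian $J^{E_A}A$ is constant and $\Delta_A\equiv 1$, so $\hat m^{h\xi}_{h(x)}$ is simply a constant multiple of $d\vol_{E_A(h(x))}$.

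Taking arclength parameters $t$ on $\cW(x)$ and $s$ on $E_A(h(x))$ and writing $h|_{C_\xi(x)}$ as $s=\phi(t)$, the pushforward identity $h_\ast(m^\xi_x)=\hat m^{h\xi}_{h(x)}$ reduces to
$$
\phi(b)-\phi(a) = c_x^{-1}\int_a^b \rho(t)\,dt,
$$
so $\phi'(t)=c_x^{-1}\rho(t)>0$ and $\phi$ inherits the regularity of $\rho$: $\phi\in C^r$. This gives $h\in C^r$ on $C_\xi(x)$ for $m$-almost every $x$.

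To promote this from a neighborhood to the whole leaf, I would fix $y\in\cW(x)$ and use that $f^{-n}$ is a uniform $\cW$-contraction, so $f^{-n}(y)$ lies in $C_\xi(f^{-n}(x))$ for $n$ large; $f$-invariance of $m$ and of the full-measure set of good points ensures $f^{-n}(x)$ remains good for every $n$. The relation $h=A^n\circ h\circ f^{-n}$ then presents $h$ near $y$ as the composition of the previously-treated $h|_{C_\xi(f^{-n}(x))}$ with the smooth maps $A^n$ and $f^{-n}$, giving $h\in C^r$ near $y$. The substantive ingredients have already been extracted — regularity of $\Delta_f$ in Lemma \ref{l.cr}, and saturation of Ledrappier's formula for the linear factor, which forces $\Delta_A\equiv 1$ — so beyond this bookkeeping I do not anticipate any real obstacle.
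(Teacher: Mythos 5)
Your argument is essentially the paper's own proof: both identify the conditional density on the $f$-side as a strictly positive $C^{r-1}$ function via Lemma~\ref{l.cr}, observe that $\Delta_A\equiv 1$ forces a constant density on the $E_A$-side, and read off $h(y)-h(x)=c^{-1}\int_{[x,y]}\rho\,d\vol$ from the pushforward identity $h_\ast m^\xi_x=\hat m^{h\xi}_{h(x)}$, whence $h\in C^r$ on the atom. The only difference is that you make explicit the extension from $C_\xi(x)$ to the whole leaf via $h=A^n\circ h\circ f^{-n}$ (where one should invoke recurrence to a set on which the atoms have size bounded below, rather than a uniform contraction alone), a step the paper leaves implicit.
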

\begin{proof}
Fix $x$ such that the disintegration of the measure in a $\cW$ adapted partition is absolutely continuous to Lebesgue in $\cW$, we have $h:C_{\xi}(x)\to C_{h\xi}(h(x))$. 
Using a foliated chart, we can assume that $h:I\rightarrow J$ where $I$ and $J$ are intervals.

We have that the measure induced in the intervals by our invariant measure is absolutely continuous, this means
\begin{equation*}
\int \varphi d(h_{\ast}m^{\xi})=\int \varphi \rho_{1} d \vol_J\quand \int \varphi d(m^{\xi})=\int \varphi \rho_{2} d\vol_I,
\end{equation*}
remember that as $A$ is linear $\Delta_A(x,y)=1$ and then $\rho_1$ is constant. 

Then
\begin{eqnarray*}
\int \left(\varphi h\right)\rho_{2} d\,\vol &=& \int \varphi d(h_{\ast}m^{\xi})\\
&=&\int \varphi \rho_{1} d\,\vol.
\end{eqnarray*}
So we have that $h$ is given by
\begin{eqnarray*}
h(y)-h(x)&=&\int_{h(x)}^{h(y)}1 d\,\vol\\
&=&\int 1_{\left[h(x),h(y)\right]}\frac{\rho_{1}}{\rho_{1}} d\,\vol\\
&=&\int 1_{\left[h(x),h(y)\right]}\circ h \frac{\rho_{2}}{\rho_{1}} d\,\vol\\
&=&\int 1_{\left[x,y\right]}\frac{\rho_{2}}{\rho_{1}} d\,\vol, 
\end{eqnarray*}
where $1_B$ is the characteristic function of $B$.

Hence, because $\rho_{2}$ is $C^{r-1}$ by lemma~\ref{l.cr}, we have that $h\in C^{r}$.
\end{proof}

\begin{proof}[proof of theorem~\ref{t.cr}]
 Fix a foliated chart $\Phi:[-1,1]^d\to M$ centered at $x$,
 also fix the partition $\varsigma$ into horizontal discs given by this foliation.
 
 Define $\rho:\Phi([-1,1]^d)\to \real$ such that 
 $$\frac{\rho(\Phi(s))}{\rho(\Phi(t))}=\Delta_{f}(\Phi(s),\Phi(t))\quand\int_{\Phi([-1,1]\times \{y\})} \rho d \vol_{\cW(\Phi(0,y))}=1$$ for every $y\in [-1,1]^{d-1}$, $t,s\in [-1,1]\times \{y\}$.
 
 If $x\in \supp(m)$ then we can take a sequence of $x_{n}\rightarrow x$ such that $m^{\varsigma}_{x_{n}}$ and $h_{\ast}m^{\varsigma}_{x_{n}}$ are absolutely continuous with
 respect to Lebesgue.
 We claim that $m^{\varsigma}_{x_{n}}\rightharpoonup \rho d\vol_{\cW(x)}$.
 To prove this claim take any $g:\Phi([-1,1]^d)\to \real$ continuous. Write $g$ in the foliated chart as $g(t,z)$ with $t\in [-1,1]$ and $z\in [-1,1]^{d-1}$, so by assumption 
 \begin{equation*}
  \int g(t,z) dm_{x_{n}}=\int g(t,x_{n})\rho(t,x_n) d \vol_{\cW(x_n)}(t)
 \end{equation*}
then by remark~\ref{r.rho} when $n\to \infty$, we have that
 \begin{equation*}
  \int g(t,z)\,dm_{x_{n}}\rightarrow \int g(t,x) \rho(t,x) d\vol_{\cW(x)}(t).
 \end{equation*}
 
Also, because $h$ is continuous, $h_{\ast}m^{\varsigma}_{x_{n}}\rightharpoonup h_{\ast}\left(\rho d\vol_{\cW(x)}\right)$ and by the same arguments as before  
we also have that $h_{\ast}m^{\varsigma}_{x_{n}}\rightharpoonup \rho_{A} d\vol_{E_A}$, so $h$ also takes Lebesgue measure with $C^{r-1}$ density to Lebesgue measure with $C^{r-1}$ density in $\cW(x)$,
then we can apply the same argument as in proposition~\ref{p.abscont} to every $x\in \supp(m)=\torus$.
\end{proof}

\section{Proof of the Main Theorem}

Before finishing the proof we recall the next lemma due to Journ\'e (\cite{Jo88})
\begin{lemma}\label{l.regularity}
Let $M_j$ be a manifold and $\cW^s_j$, $\cW^u_j$
be continuous transverse foliations with $C^\nu$, $\nu>0$, leaves, for $j= 1,2$. Suppose that $h:M_1\to M_2$ is a homeomorphism that maps
$\cW^s_1$ into $\cW^s_2$ and $\cW^u_1$ to $\cW^u_2$. Moreover assume
that the restrictions of
$h$
to the leaves of these foliations are
$C^\nu$, then $h$ is $C^\nu$.
\end{lemma}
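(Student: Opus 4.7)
The plan is to reduce the lemma to a purely analytic statement and then prove that statement by a polynomial-approximation characterization of $C^\nu$ regularity. The analytic statement is: a continuous function on an open subset of Euclidean space that is $C^\nu$ along the leaves of two continuous transverse foliations with uniformly $C^\nu$ leaves is itself $C^\nu$. Once this is established, the lemma follows by applying it, in local coordinates, to each coordinate function of $h$.

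To carry out the reduction, fix an arbitrary $p \in M_1$ and choose foliated-box coordinates at $p$ and at $h(p)$. Using the $C^\nu$ charts provided by $\cW^u_1$ and $\cW^u_2$, we may straighten these foliations so that their leaves become the horizontal subspaces $\mathbb{R}^a \times \{v\}$ of a product $U \times V \subset \mathbb{R}^a \times \mathbb{R}^b$. The transverse foliations $\cW^s_j$ are then represented as continuous families of $C^\nu$ graphs transverse to the horizontals. Because $h$ preserves both foliation pairs, in these coordinates it sends horizontal slices to horizontal slices and transverse $C^\nu$ graphs to transverse $C^\nu$ graphs, and the hypothesis on leafwise regularity translates directly into the hypothesis of the analytic statement for each component of $h$.

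For the analytic statement itself, I would use the following characterization of $C^\nu$: writing $\nu = m + \alpha$ with $m \in \mathbb{N}$ and $\alpha \in (0,1]$, a continuous function $F$ on an open set of $\mathbb{R}^d$ is $C^\nu$ iff for each compact subset there is a constant $C$ such that at every base point $x$ one can produce a polynomial $P_x$ of degree $\leq m$ with $|F(y) - P_x(y)| \leq C|y-x|^\nu$ uniformly. Using the $C^\nu$ regularity of $F$ along the leaves of the first foliation, at every $x$ I obtain a polynomial $P^u_x$ of degree $\leq m$ on $T_x\cW^u(x)$ that approximates $F$ to order $\nu$ along $\cW^u(x)$; similarly $P^s_x$ on $T_x\cW^s(x)$. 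The key step is a combination lemma: by transversality, any $y$ near $x$ is reached from $x$ by a leafwise $\cW^u$-path to some $x'$ followed by a leafwise $\cW^s$-path to $y$, the two lengths being comparable to $|y-x|$; one splices $P^u_x$ with the family $\{P^s_{x'}\}_{x'\in \cW^u(x)}$ to manufacture a single polynomial $P_x$ in the ambient coordinates, and the leafwise $\nu$-order estimates at the intermediate points $x'$ at distance $O(|y-x|)$ from $x$ control the mismatch.

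The main obstacle is exactly this combination step: one must obtain a \emph{joint} polynomial approximation of order $\nu$ while having only leafwise $C^\nu$ information and merely continuous transverse variation of the foliations. Journé's observation is that the leafwise $C^\nu$ regularity of the approximating polynomials on nearby parallel leaves (together with the continuity of the foliation in the transverse direction) forces their coefficients to depend on the base point in a way that is just good enough for the splicing to preserve the $\nu$-th order bound. For integer $\nu = m$ one can, alternatively, proceed by induction on $m$, reducing to $\nu = 1$: there transversality of the two foliations together with the two leafwise differentials gives a bounded ambient differential by an elementary chain-rule argument. Applying the analytic statement to each coordinate of $h$ at every $p \in M_1$ yields the global $C^\nu$ regularity of $h$.
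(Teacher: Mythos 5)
The paper does not actually prove this lemma: it is quoted verbatim from Journ\'e \cite{Jo88}, so there is no internal proof to compare against, and your task was in effect to reconstruct Journ\'e's argument. Your outline does follow the strategy of his original proof --- characterize $C^\nu$ by uniform pointwise approximation by polynomials of degree $\leq\lfloor\nu\rfloor$, produce such approximations along each of the two families of leaves, and splice them. But as written, your text is a description of that proof rather than a proof: the entire difficulty is concentrated in what you call the ``combination lemma,'' and at exactly that point you write that ``Journ\'e's observation is that \ldots the coefficients depend on the base point in a way that is just good enough for the splicing to preserve the $\nu$-th order bound.'' That sentence asserts the conclusion of the hard estimate instead of establishing it. Concretely, what must be shown is that the coefficients of the leafwise polynomials $P^s_{x'}$, as $x'$ ranges over $\cW^u(x)$, themselves admit a polynomial approximation in $x'$ of the appropriate order, even though the transverse dependence of the foliations and of $h$ is only continuous; Journ\'e achieves this by a quantitative interpolation argument, evaluating the polynomials on a fixed grid of points distributed along the transverse leaves and controlling the inverse of the resulting Vandermonde-type matrix. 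None of that is present, so the proposal has a genuine gap at its central step.

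Two further cautions. First, the polynomial-approximation characterization you invoke with $\nu=m+\alpha$, $\alpha\in(0,1]$, is false at $\alpha=1$: a degree-$0$ approximation with error $O(|y-x|)$ characterizes Lipschitz functions, not $C^1$ ones, and at integer exponents one lands in Zygmund-type classes rather than $C^m$. Journ\'e's lemma is correspondingly formulated for non-integer H\"older exponents (or for $C^{n,\omega}$ classes), and the application in this paper to obtain a $C^1$ conjugacy already requires care on exactly this point. Second, your fallback for integer $\nu$ --- induct down to $\nu=1$ and finish with ``an elementary chain-rule argument'' --- is not elementary: transversality does produce a candidate differential from the two leafwise differentials, but proving that this candidate is the genuine, continuous differential of $h$ when the foliations vary only continuously in the transverse direction is essentially the content of the lemma in the $C^1$ case, not a consequence of the chain rule.
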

Now we are able to prove our main theorem.
\begin{proof}[Proof of Theorem~\ref{teo}]
Fix some invariant measure $m$ absolutely continuous on $\cW^{wu}$ such that $\lambda^m_{wu}(x)=\chi_{wu}$. 

As $h$ preserves the weak unstable manifold, by theorem~\ref{t.cr} we have that 
$h$ is $C^1$ in the weak unstable direction. Using $f^{-1}$ and the fact that $h$ preserves the stable manifold we also conclude that $h$ is $C^1$ in the stable direction.

In general $h(\cW^{uu})$ is not necessarily contained in $E^{uu}_A$, but in \cite[lemma~6]{Gog08} Gogolev and Guysinsky proved that if $h$ restricted to the weak unstable manifold is $C^1$ (as in our case), then $h(\cW^{uu})=E^{uu}_A$. 

Applying theorem~\ref{t.cr} once more we conclude that $h$ is also 
$C^1$ in the strong unstable direction.

Using lemma~\ref{l.regularity}, first with the $\cW^{uu}$ and $\cW^{wu}$, sub-foliations of $\cW^u$, we conclude that $h$ is $C^1$ in $\cW^u$, using again the lemma with $\cW^u$ and $\cW^s$ we conclude that $h:\torus\to \torus$ is $C^1$.

For the second part observe that, as $A$ preserves the volume measure $\vol$ in $\torus$ and $h$ is $C^1$, we have that $\mu=h^{-1}_* m$ is absolutely continuous with $\vol$. Moreover, as $h^{-1}(E^{wu}_A)=\cW^{wu}$ we have that $\cW^{wu}$
is absolutely continuous, so the Lyapunov exponents with respect to $\mu$ are Lebesgue Lyapunov exponents in every direction.
\end{proof}
\begin{remark}
The weak unstable manifold is only $C^1$, this is the reason why we can not prove that $h$ is actually $C^\nu$, we only have that $h$ is $C^\nu$ on the strong unstable and stable directions.
\end{remark}

 \begin{proof}[Proof of Theorem~\ref{teo2}]
 Let $m$ be the volume measure. Suppose that $E^u$ is one dimensional and $\lambda^u_m=\chi^u_f$, as $f$ is volume preserving, $\cW^u$ is absolutely continuous and $h(\cW^u)=E^u_A$  we can apply theorem~\ref{t.cr} to $\cW^u$, then the conjugation between $f$ and $A$ is $C^1$ in the $u$-direction. In particular there exist $\lambda>0$ such that
 $ \norm{Df^{k_p}(p)\mid_{E^u(p)}}=e^{k_p \lambda}$ for every $k_p$-periodic point $p$. 
 Take two periodic points $p$ and $q$ and make a perturbation such that 
 the eigenvalue in the unstable direction in $p$ changes but the one in $q$ does not. By theorem~\ref{t.cr} this implies that $\lambda^u<\chi$, observe that the condition 
 $ Df^{k_p}(p)\mid_{E^u(p)}=e^{k_p \lambda}$ and $ Df^{k_q}(q)\mid_{E^u(q)}=e^{k_q \lambda'}$ with $\lambda\neq \lambda'$ is an open condition, then we have that $\lambda^u<\chi$
 in an open and dense set of the volume preserving Anosov maps.
 \end{proof}

\medskip{\bf Acknowledgements.} The author was partially supported by Universit\'e Paris 13.

\end{document}